\documentclass[12pt]{amsart}

\title{A topological fibrewise fundamental groupoid}
\author{David Michael Roberts}
\email{david.roberts@adelaide.edu.au}
\address{School of Mathematical Sciences\\
University of Adelaide\\
SA 5005\\
Australia}
\thanks{Supported by the Australian Research Council 
(grant number DP120100106). This document is released under a CC0 license \url{http://creativecommons.org/publicdomain/zero/1.0/}}

\date{}

\usepackage{amsmath}
\usepackage{amssymb}
\usepackage{graphicx}
\usepackage{epsfig}
\usepackage[mathscr]{eucal} 
\usepackage{mathrsfs}
\usepackage{amsthm}
\usepackage{url}

\usepackage{ifpdf}
\ifpdf
\usepackage[all,pdf]{xy} 
\else
\input xy
\xyoption{all}
\xyoption{2cell}
\xyoption{v2}
\fi

\newtheorem{theorem}{Theorem}[section]
\newtheorem{lemma}[theorem]{Lemma}
\newtheorem{proposition}[theorem]{Proposition}
\newtheorem{corollary}[theorem]{Corollary}
\theoremstyle{definition}
\newtheorem{definition}[theorem]{Definition}
\newtheorem{example}[theorem]{Example}

\newcommand{\pionevert}{\Pi_1^{vert}}
\newcommand{\Top}{\mathbf{Top}}
\newcommand{\TopGpd}{\mathbf{TopGpd}}
\newcommand{\Gpd}{\mathbf{Gpd}}

\newcommand*\cocolon{%
        \nobreak
        \mskip6mu plus1mu
        \mathpunct{}%
        \nonscript
        \mkern-\thinmuskip
        {:}%
        \mskip2mu
        \relax
}

\DeclareMathOperator{\disc}{disc}
\DeclareMathOperator{\codisc}{codisc}
\DeclareMathOperator{\Obj}{Obj}
\DeclareMathOperator{\Mor}{Mor}
\DeclareMathOperator{\id}{id}
\DeclareMathOperator{\pr}{pr}
\DeclareMathOperator{\ev}{ev}

\begin{document}

\keywords{Fundamental groupoid, parameterized homotopy theory, topological groupoid.}
\subjclass[2000]{Primary 55R70; Secondary 18B40, 22A22}

\begin{abstract}

	It is well-known that for certain local connectivity assumptions the fundamental groupoid of a topological space can be equipped with a topology making it a topological groupoid.
	In other words, the fundamental groupoid functor can be lifted through the forgetful functor from topological groupoids to groupoids.
	This article shows that for a map $Y \to X$ with certain relative local connectivity assumptions, the \emph{fibrewise} fundamental groupoid can also be lifted to a topological groupoid over the space $X$.
	This allows the construction of a simply-connected covering space in the setting of fibrewise topology, assuming a local analogue of the definition of an ex-space.
	When applied to maps which are up-to-homotopy locally trivial fibrations the result is a categorified version of a covering space. 
	The fibrewise fundamental groupoid can also be used to define a topological fundamental bigroupoid of a (suitably locally connected) topological space.

\end{abstract}

\maketitle

\section{Introduction}

The fundamental groupoid of a semilocally 1-connected space $Y$ can be made into a topological groupoid, with object space $Y$ (\cite{Danesh-Naruie}, published in \cite{Brown-Danesh-Naruie_75}).  
Given a family of spaces, that is, a map $p\colon Y\to X$, we can define a \emph{vertical} fundamental groupoid $\pionevert(p)$, which is a family of groupoids parameterised by the underlying set of $X$. 
The groupoid at the point $x\in X$ is the fundamental groupoid $\Pi_1(Y_x)$ of the fibre $Y_x$ over $x$. 
It is a natural question to ask whether one can put an appropriate topology on the arrow set of $\pionevert(Y\to X)$ making it a topological groupoid over the space $X$.

Given local connectivity assumptions, we define such a topology, giving a functor
\[
  \pionevert{} \colon \Top/X \to \TopGpd/X
\]
which is compatible with pullback along maps $Z\to X$. This functor is additionally a \emph{simplicial} functor for the obvious simplicial enrichments of its domain and codomain. 
This generalises a construction given in the author's thesis \cite{Roberts_phd}, which assumed $p$ was a fibre bundle.

This idea of performing constructions \emph{relative to}, or \emph{parameterised by}, a base space has been common in algebraic geometry for decades.
In homotopy theory itself, some work has been done  on \emph{ex-spaces} (maps with a specified section with closed image), starting with \cite{James_69}, but uptake has not been widespread until relatively recently (see e.g.\ \cite{Crabb-James_98,May-Sigurdsson}).
The construction in this paper allows one to define a \emph{parameterised universal covering space}: that is, a continuously varying family of covering spaces of the fibres of a map $Y\to X$ satisfying certain properties.
The usual fundamental groupoid can be further refined to a Lie groupoid, and the smooth universal covering space can be extracted from it.
The minimal conditions when it is possible to perform this smooth construction in the fibrewise setting will be not be treated in the present paper.

We give two further applications of the construction given in this article. 
The first is that for fibrations which are locally trivial up to homotopy, the vertical fundamental groupoid gives a categorified bundle, or \emph{2-bundle} (e.g.\ \cite{Bartels}).
More precisely, it gives, in the terminology of \cite{Roberts_phd}, a \emph{2-covering space}. 
The theory of 2-bundles has been generalised to $n$- and $\infty$-bundles \cite{NSS_14}, but it is a nontrivial exercise to come up with examples, and this paper is a contribution in that direction at the modest level of $n=1$.
The second application is to show that the fundamental bigroupoid of a space \cite{Danny_phd,HKK_01} can be lifted to a topological bigroupoid. 
This was shown in \cite{Roberts_phd,Roberts_13b} by direct construction, but here uses the structure of the free path space as a sort of $A_\infty$-groupoid (see also \cite{Cheng_CT14}).
Given a topological bigroupoid one can use the construction in \cite{Bakovic_phd} to arrive at a 2-bundle in a different way, namely as the fibre at an object of the source functor, with the projection given by the restriction of the target functor. 
In general, though, it is not easy to supply topologically nontrivial examples of topological bigroupoids. What this paper does \emph{not} do is give an example where the hom-groupoids are not equivalent to topologically discrete groupoids.

There are other constructions that assign internal groupoid objects of a homotopical nature to maps of spaces. 
There is a construction by Brown and Janelidze \cite{Brown-Janelidze} of a double groupoid $\rho(q)$ from a map $q\colon M\to B$ of topological spaces, which in particular contains the 2-groupoid of Kamps and Porter \cite{Kamps-Porter}, defined when $q$ is a fibration.
The groupoid of vertical arrows of $\rho(q)$ is the \v Cech groupoid of the map $q$ (and in particular is an equivalence relation); the groupoid of horizontal arrows of $\rho(q)$ is just $\Pi_1(M)$. 
The vertical fundamental groupoid has identity-on-objects functors to both of these.
The functor to the vertical groupoid is analogous to the canonical identity-on-objects functor from any groupoid to a codiscrete groupoid.
The functor to the usual fundamental groupoid is neither full nor faithful in general.

The author thanks Ronnie Brown and the referee for pointing out the work of Brown--Janelidze and Kamps--Porter, and Ronnie Brown again for picking up an error regarding a claimed relation between the current paper and those papers.

\section{Semilocal 1-connected maps}

Now for the definition of the type of map in which we will be interested. 
The reader is invited to draw the analogy with what one might call `relativisation' of properties of schemes to properties of maps of schemes in algebraic geometry.

\begin{definition}

	Given a map $p\colon Y\to X$, we say $p$ is \emph{semilocally 1-connected}, or \emph{sl1c},  if for all $y \in Y$ there is a basic open neighbourhood $U$ of $p(y)$ and a basic open neighbourhood $N$ of $y$ with $p(N) \subset U$ such that for any square
	\[
		\xymatrix{
			\partial I^2 \ar[d] \ar[r] & N \ar[r] & U\times_X Y \ar[d]\\
			I^2 \ar[rr] \ar@{-->}[urr] && U
		}
	\]
	there is a lift as shown.

\end{definition}

\begin{example}

	A fibre bundle $E\to B$ with semilocally 1-connected fibre $F$ is an sl1c map.

\end{example}

We now give a pair of useful lemmas giving the behaviour of sl1c maps. 
The notion of a property of maps being \emph{local on the base} is familiar in algebraic geometry, but less so in algebraic topology, so we recall it here.

\begin{definition}

	A property $P$ of continuous maps is \emph{local on the base} if for any map $p\colon X\to Y$ and any open cover $\{U_i \to Y\}$, then $p$ has property $P$ if and only if each $U_i\times_Y X \to U_i$ has property $P$.

\end{definition}

\begin{lemma}
	\label{lemma:sl1c_local_on_base}

	The property of being sl1c is local on the base, and is preserved under arbitrary pullback.

\end{lemma}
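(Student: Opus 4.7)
The plan is to prove pullback preservation first, and then deduce the locality statement by applying pullback preservation for one direction and a direct check for the other. The whole argument is structural; the content of sl1c is being re-expressed in nested pullbacks, and the lifts carry across unchanged.

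For pullback preservation, let $p\colon Y\to X$ be sl1c and $f\colon Z\to X$ arbitrary; write $q\colon Z\times_X Y\to Z$ for the pullback. Given $(z,y)$ in the pullback, apply sl1c of $p$ at $y$ to get basic open neighbourhoods $U\ni p(y)$ and $N\ni y$ with $p(N)\subset U$ and the lifting property. Choose a basic open neighbourhood $V\ni z$ contained in $f^{-1}(U)$, and set $M=(V\times N)\cap(Z\times_X Y)$, a basic open neighbourhood of $(z,y)$ with $q(M)\subset V$. A test square with top edge $\partial I^2\to M\to V\times_Z(Z\times_X Y)$ and bottom edge $I^2\to V$ pushes forward along $V\to U$ to the data of an sl1c test square for $p$ with top edge into $U\times_X Y$; the lift $I^2\to U\times_X Y$ provided by sl1c of $p$ combines with the given $I^2\to V$ via the universal property of $V\times_X Y\cong V\times_Z(Z\times_X Y)$ to give the desired lift.

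For local on the base, the `only if' direction is immediate, since each $U_i\times_X Y\to U_i$ is the pullback of $p$ along the open inclusion $U_i\hookrightarrow X$. For the `if' direction, given $y\in Y$ pick $i$ with $p(y)\in U_i$, and identify $y$ with a point of $U_i\times_X Y\cong p^{-1}(U_i)$. Apply sl1c of $p_i$ there to get basic open neighbourhoods $U\ni p(y)$ in $U_i$ and $N\ni y$ in $U_i\times_X Y$ with the lifting property; since $U_i$ is open in $X$, $U$ and $N$ are also basic open neighbourhoods in $X$ and $Y$ respectively (with the chosen bases). Because $U\subset U_i$ one has $U\times_X Y=U\times_{U_i}(U_i\times_X Y)$, so an sl1c test square for $p$ at $(U,N)$ is literally an sl1c test square for $p_i$ at $(U,N)$, and the same lift works.

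The only real obstacle is notational bookkeeping around the several pullbacks $U\times_X Y$, $U\times_{U_i}(U_i\times_X Y)$, $V\times_X Y$, and $V\times_Z(Z\times_X Y)$, together with verifying that `basic' neighbourhoods in a subspace are basic in the ambient space for a suitably compatible choice of bases. Once these canonical identifications are fixed, both parts of the lemma reduce to transporting a single lifting problem across an isomorphism of pullbacks.
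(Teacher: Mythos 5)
The paper states this lemma without proof, so there is nothing to compare against; your argument is the evident one and is correct. Both halves check out: the pullback lift is obtained by pushing the test square forward to $U\times_X Y$ and reassembling via the universal property of $V\times_X Y\cong V\times_Z(Z\times_X Y)$, and the locality claim follows from pullback along open inclusions in one direction and the identification $U\times_X Y=U\times_{U_i}(U_i\times_X Y)$ for $U\subset U_i$ in the other, with only the (harmless) bookkeeping about what counts as a basic neighbourhood left implicit.
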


The following lemma is the relative analogue of semilocal simple connectedness being preserved by products. 

\begin{lemma}

	If $X\to Z \leftarrow Y$ is a cospan of sl1c maps, the map  $X\times_Z Y \to Z$ is sl1c.

\end{lemma}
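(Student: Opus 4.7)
Plan: Fix a point $(x_0, y_0) \in X \times_Z Y$ with common image $z_0 \in Z$; I will construct basic opens witnessing sl1c at this point. Applying the sl1c property of $f\colon X\to Z$ at $x_0$ yields basic opens $U_1 \ni z_0$ in $Z$ and $M_1 \ni x_0$ in $X$ with $f(M_1)\subset U_1$, and applying sl1c of $g\colon Y\to Z$ at $y_0$ gives $U_2, M_2$, each pair with the required lifting property. Choose a basic open $V \ni z_0$ in $Z$ contained in $U_1 \cap U_2$; by Lemma~\ref{lemma:sl1c_local_on_base} both $f^{-1}(V) \to V$ and $g^{-1}(V) \to V$ are sl1c. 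Now shrink $M_1$ to a basic open $M_1' \ni x_0$ inside $M_1 \cap f^{-1}(V)$, and similarly $M_2' \ni y_0$ inside $M_2 \cap g^{-1}(V)$. The key observation is that $M_1'$ still has the lifting property, now with respect to the smaller base $V$: any square with top in $M_1' \subset M_1$ and bottom in $V \subset U_1$ is a valid instance of the original property of $M_1$, and the lift $I^2 \to f^{-1}(U_1)$ one obtains automatically factors through $f^{-1}(V)$ because its projection to $U_1$ agrees with the given $I^2 \to V$. The same holds for $M_2'$.

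Set $N := M_1' \times_Z M_2'$, a basic open of $X \times_Z Y$ containing $(x_0, y_0)$ and mapping into $V$. To verify the sl1c lifting property for the pair $(V,N)$, consider a square with top $\partial I^2 \to N$ and bottom $I^2 \to V$. Post-composing the top with the two projections $N \to M_1'$ and $N \to M_2'$ produces two independent lifting problems, solved by the (shrunken) sl1c data for $f$ and $g$, yielding lifts $I^2 \to f^{-1}(V)$ and $I^2 \to g^{-1}(V)$. Both have the same composition with $V$ (namely the given bottom map), so by the universal property of the pullback they combine into a single map
\[
  I^2 \to f^{-1}(V)\times_V g^{-1}(V) \;=\; V \times_Z (X \times_Z Y),
\]
which is the required lift, and which restricts on $\partial I^2$ to the original map into $N$ by the same universal property.

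The main obstacle is the basic-open bookkeeping in the first paragraph: arranging that the lifting property transfers from the original witnesses $(U_i, M_i)$ to the shrunken pair $(V, M_i')$. Once this is settled the rest is formal, being two applications of the universal property of $X \times_Z Y$ --- first to view $N$ as a basic open built from basic opens of $X$ and $Y$, and then to glue the pair of lifts supplied by $f$ and $g$ into a single lift into the fibre product.
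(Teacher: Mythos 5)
Your proof is correct, and it follows what is evidently the intended argument (the paper states this lemma without proof, calling it the relative analogue of semilocal simple connectedness being preserved by products): shrink the base neighbourhood to a common $V$, observe that the lifts for $f$ and $g$ automatically land over $V$ since their projections agree with the prescribed bottom map, and glue the two lifts by the universal property of the fibre product. The ``key observation'' you isolate is indeed the only non-formal step, and your justification of it is right.
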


\section{Definition of $\pionevert$}

Fix a map $p\colon Y\to X$ of topological spaces. We let the set of objects of $\pionevert(p)$ be just (the underlying set of) $Y$. 
The set of arrows is the quotient $[I,Y]_{vert}$ of the set of vertical paths $I\to Y$ by the relation of vertical homotopy relative to $\{0,1\} \subset I$. 
Source and target maps are given by evaluation at $0,1\in I$ respectively; the unit map sends $y\in Y$ to the  constant path at $y$; inversion is induced by the involution $t\mapsto 1-t$ on $I$; composition is concatenation of paths. 
The standard proofs for associativity etc.\ follow as for the usual fundamental groupoid, so we will not dwell on them here. 
Note that there is a functor from $\pionevert(p)$ to the groupoid given by the set underlying $X$, which shall also be denoted $p$; it shall be clear from context whether the functor or the map is intended.

More interesting is the definition of the topology on $\pionevert(p)_1$ and the proof that the above operations are continuous. We assume that $[I,Y]_{vert}$ is given the subspace topology induced from the compact-open topology on $[I,Y]$.

Let $[f]\in \pionevert(p)_1$, $U \subset X$ a basic open neighbourhood containing $p[f]$ and $V_0,V_1$ basic open neighbourhoods of $s[f]$ and $t[f]$  respectively. 
The basic open neighbourhoods of $[f]$ are those sets of the form
\begin{align*}
	\langle[f],U,V_0,V_1\rangle:=\{[g] \in [I,Y]_{vert}\ \mid\  &\exists h								\colon I^2 \to p^{-1}(U), h(0,-) = f,\\
  								& h(1,-) = g, 
  								 h(\epsilon,t) \in V_\epsilon, 
  								 \epsilon=0,1\}
\end{align*}
One can check that in the case that $X=\ast$, this reduces to the definition of the topology on the arrows of $\Pi_1(Y)$ (see e.g.\ \cite{Brown-Danesh-Naruie_75}).

\begin{lemma}
	The map $[I,Y]_{vert} \to \pionevert(p)_1$ is continuous.
\end{lemma}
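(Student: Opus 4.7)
The plan is to show that the preimage under $q \colon [I,Y]_{vert} \to \pionevert(p)_1$ of each basic open neighborhood $\langle [f], U, V_0, V_1 \rangle$ is open in the subspace topology inherited from the compact-open topology on $[I,Y]$. Fix $g_0$ in this preimage, together with a witness homotopy $h_0 \colon I^2 \to p^{-1}(U)$ with $h_0(0,-) = f$, $h_0(1,-) = g_0$, and endpoint paths in $V_0, V_1$. Homotopies concatenate, so it suffices to construct a compact-open neighborhood $\mathcal{O}$ of $g_0$ such that for every $g \in \mathcal{O}$ there is a homotopy $h_1 \colon I^2 \to p^{-1}(U)$ from $g_0$ to $g$ with $h_1(s, \epsilon) \in V_\epsilon$ for $\epsilon = 0, 1$; the concatenation $h_0 \ast h_1$ then witnesses $[g] \in \langle [f], U, V_0, V_1 \rangle$.

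To construct $\mathcal{O}$, I would use that $p$ restricted over $U$ remains sl1c by Lemma~\ref{lemma:sl1c_local_on_base}. Using compactness of $g_0(I) \subset p^{-1}(U)$, I cover it by finitely many sl1c basic pairs $(N_i, U_i)$ with $U_i \subset U$, and choose a partition $0 = t_0 < \cdots < t_k = 1$ with $g_0([t_{i-1}, t_i]) \subset N_i$. I arrange $N_1 \subset V_0$ and $N_k \subset V_1$, and at each interior subdivision point I pick a smaller sl1c neighborhood $M_i \subset N_i \cap N_{i+1}$ of $g_0(t_i)$. Then
\[
\mathcal{O} = \bigcap_{i=1}^k W([t_{i-1}, t_i], N_i) \cap \bigcap_{i=1}^{k-1} W(\{t_i\}, M_i) \cap [I,Y]_{vert}
\]
is an open neighborhood of $g_0$ in the subspace topology induced by the compact-open topology.

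For $g \in \mathcal{O}$, I build $h_1$ piecewise on each rectangle $I \times [t_{i-1}, t_i]$. The two path restrictions $g_0|_{[t_{i-1},t_i]}, g|_{[t_{i-1},t_i]}$ and two short corner-connecting paths (inside $M_{i-1}, M_i$, or inside $N_1 \subset V_0$ and $N_k \subset V_1$ at the ends) form a boundary map $\partial I^2 \to N_i$. Invoking sl1c produces a filling $I^2 \to p^{-1}(U_i) \subset p^{-1}(U)$, and the pieces glue along shared corner paths to give a continuous $h_1$. The endpoint-tracking condition $h_1(s, \epsilon) \in V_\epsilon$ holds because the corner paths at $t = 0, 1$ were chosen inside $V_0, V_1$ respectively.

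The main technical obstacle is verifying the hypothesis of sl1c on each strip: one must supply a compatible disk $I^2 \to U_i$ of the projected boundary loop in the base. When $g$ and $g_0$ lie over distinct fibers of $p$, this loop is nontrivial, so an additional refinement of the partition and a shrinking of the $U_i$ is needed to ensure that each corner-connecting path projects into a contractible region of $U_i$. Once that refinement is in place, the sl1c fillings all exist, and continuity of the glued $h_1$ and of the concatenation $h_0 \ast h_1$ is routine.
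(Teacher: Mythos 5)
The paper leaves this lemma unproved, but your overall strategy is the right one and is surely what is intended: it suffices to show that the preimage of each basic open set $\langle[f],U,V_0,V_1\rangle$ is open, and since the witnessing homotopies concatenate, this reduces to producing, around each $g_0$ in that preimage, a compact--open neighbourhood every member of which is joined to $g_0$ by a homotopy in $p^{-1}(U)$ tracking endpoints in $V_0$ and $V_1$; your strip-by-strip construction from sl1c fillings is the standard relativisation of the Brown--Danesh-Naruie argument. Two points, however, are not routine. First, the corner-connecting paths from $g_0(t_i)$ to $g(t_i)$ inside $M_i$ require the basic neighbourhoods of $Y$ to be path-connected (an implicit local path-connectedness assumption you should state); note also that these paths are not vertical, since $g$ and $g_0$ generally lie over different points of $X$ --- this is harmless for membership in $\langle[f],U,V_0,V_1\rangle$, which only asks that the homotopy land in $p^{-1}(U)$, but it is exactly what creates the obstacle you flag.

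Second, and this is the genuine gap: your proposed resolution of that obstacle does not work. The projected boundary loop of the $i$-th strip is, after cancelling the two constant edges coming from the vertical paths $g_0$ and $g$, the loop $\beta_i\cdot\overline{\beta_{i-1}}$ in $U_i$ based at $p(g_0)$, where $\beta_j$ denotes the projection of the $j$-th corner path. Shrinking the partition so that each $\beta_j$ individually lies in a contractible subset of $U_i$ does not make this loop null-homotopic in $U_i$: if $U_i$ is an annulus and $p(g_0)$, $p(g)$ sit on opposite sides of the hole, two short connecting paths passing on either side each lie in contractible subsets while $\beta_i\cdot\overline{\beta_{i-1}}$ generates $\pi_1(U_i)$. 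What is actually needed is that the $U_i$ may be taken $1$-connected, i.e.\ that $X$ is locally $1$-connected --- the hypothesis carried by the neighbouring lemma on continuity of multiplication, which must be read as in force here as well (the lemma as literally stated carries no hypotheses on $p$ or $X$, and without some such hypothesis this argument, and plausibly the statement itself, fails). With each $U_i$ simply connected every projected boundary loop bounds, the sl1c fillings exist, and the rest of your construction and the gluing go through as you describe.
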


This allows us to define a continuous map to $\pionevert(p)_1$ by first defining a map to $[I,Y]_{vert}$.

In order to show that multiplication in the vertical fundamental groupoid is continuous we need the relative form of the usual semilocal simple connectedness condition.

\begin{lemma}

	Let $X$ be locally 1-connected and $p\colon Y\to X$ be semilocally 1-connected. 
	Then multiplication in the groupoid $\pionevert(p)$ is continuous.

\end{lemma}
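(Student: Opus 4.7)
The plan is to show that for any basic open $\langle[g\cdot f],U_0,V_0^0,V_2^0\rangle$ around $[g\cdot f]$, one can find basic opens $A\ni[f]$, $B\ni[g]$ such that every composable pair $([f'],[g'])\in A\times_Y B$ composes into it. Write $y:=t[f]=s[g]$ and $x:=p(y)$. The key is to choose a common middle neighbourhood $V_1$ of $y$ realising both the sl1c lifting property inside $U_0$ and the nullhomotopy of loops after projection to $X$.

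By Lemma \ref{lemma:sl1c_local_on_base} the restricted map $p^{-1}(U_0)\to U_0$ is sl1c, yielding a basic open $U^\dagger\subset U_0$ with $x\in U^\dagger$ and a basic open $N\ni y$ with $p(N)\subset U^\dagger$ enjoying the sl1c lifting property. Local 1-connectedness of $X$ supplies a basic simply connected $W'\subset U^\dagger$ containing $x$; replacing $U^\dagger$ by $W'$ and $N$ by $N\cap p^{-1}(W')$ preserves the lifting property, because any lift is forced into $p^{-1}(W')$ by its prescribed projection. Choose a basic open $V_1$ of $Y$ with $y\in V_1\subset N\cap p^{-1}(W')$, and set $A:=\langle[f],U_0,V_0^0,V_1\rangle$ and $B:=\langle[g],U_0,V_1,V_2^0\rangle$.

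Given composable $[f']\in A$ and $[g']\in B$ with $t[f']=s[g']=:y'$, choose witnessing homotopies $h_f,h_g\colon I^2\to p^{-1}(U_0)$ as in the definition of the basic opens. The paths $\alpha(\epsilon):=h_f(\epsilon,1)$ and $\beta(\epsilon):=h_g(\epsilon,0)$ both lie in $V_1$ and run from $y$ to $y'$, but need not coincide. Together with the constant paths at $y$ and $y'$ they form a map $\partial I^2\to N$ whose projection to the simply connected $W'$ extends to a disk $I^2\to W'$; the sl1c lifting property then delivers a bridging square $M\colon I^2\to p^{-1}(W')$ with $M(\epsilon,0)=\alpha(\epsilon)$, $M(\epsilon,1)=\beta(\epsilon)$, $M(0,t)=y$ and $M(1,t)=y'$.

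Finally, I assemble $H\colon I^2\to p^{-1}(U_0)$ by splitting the $t$-direction into thirds: $H(\epsilon,t)=h_f(\epsilon,3t)$ on $[0,1/3]$, $H(\epsilon,t)=M(\epsilon,3t-1)$ on $[1/3,2/3]$, and $H(\epsilon,t)=h_g(\epsilon,3t-2)$ on $[2/3,1]$. The boundary matches of $M$ with $h_f$ and $h_g$ guarantee continuity, and the endpoint conditions $H(\epsilon,0)\in V_0^0$, $H(\epsilon,1)\in V_2^0$ are inherited from $h_f,h_g$. The slice $H(0,-)$ traverses $f$ on $[0,1/3]$, sits constant at $y$ on $[1/3,2/3]$, and traverses $g$ on $[2/3,1]$, so it is vertically homotopic rel endpoints to $g\cdot f$ within the fibre over $x$; similarly $H(1,-)$ is homotopic to $g'\cdot f'$. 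Since the basic open $\langle[g\cdot f],U_0,V_0^0,V_2^0\rangle$ depends only on the vertical homotopy class of its first argument, $H$ witnesses $[g'\cdot f']$ inside it. The main obstacle is the construction of the bridging square $M$, which is precisely what combining sl1c at $y$ with local 1-connectedness of $X$ is tailored to provide.
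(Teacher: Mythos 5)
Your proof is correct and follows essentially the same route as the paper: choose a middle neighbourhood $V_1$ around $t[f]=s[g]$, use local $1$-connectedness of $X$ to fill the square formed by the two endpoint traces downstairs, lift that filler via the sl1c property to a bridging square upstairs, and paste it between the two witnessing homotopies. The only difference is that you are more explicit than the paper about arranging $V_1$ so that the sl1c lifting property and the simple connectivity of the projected neighbourhood hold simultaneously, which is a point the paper's proof leaves implicit in its choice of ``basic'' open neighbourhoods.
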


\begin{proof}

	Let $[f],[g]$ be composable arrows in $\pionevert(p)$, and take a 
	basic open neighbourhood $\langle[fg],U,V_0,V_2\rangle$ of $[fg]$.
	Let $V_1$ be a basic open neighbourhood of $t[f]=s[g]$ such that $p(V_1)\subset U$. 
	Then let us consider the image of the basic open neighbourhood
	\[
		\langle[f],U,V_0,V_1\rangle \times_Y \langle[g],U,V_1,V_2\rangle
	\]
	under the multiplication map. 
	We will show this is contained in the set $\langle[f][g],U,V_0,V_2\rangle$

	Let $([k],[l])$ be an element of the above set. 
	There are homotopies $H_k$ and $H_l$ in $Y_U$ and we get a map $\partial I^2 \to V_1$ which is constant on the vertical edges, and given by $t\circ H_l$ and $s\circ H_k$ on the other two edges. 
	Since by assumption $U$ is simply connected, we can find an extension to $I^2$ of the map $\partial I^2 \to V_1 \to U$. As $p$ is semilocally 1-connected, we can lift this extension to a map $h\colon I^2 \to Y_U$.

	We then paste the homotopies $H_k$, $H_l$ and $h$ to get a homotopy between $[f][g]$ and $[k][l]$, so that $[k][l]\in \langle[f][g],U,V_0,V_2\rangle$.
	Thus
	\[
		m\left(\langle[f],U,V_0,V_1\rangle \times_Y \langle[g],U,V_1,V_2\rangle\right)
		\subset \langle[f][g],U,V_0,V_2\rangle,
	\]
	and hence the multiplication map is continuous.
\end{proof}

The continuity of the source, target, unit and inverse maps is easy to check (and in fact doesn't need the topological condition in the above lemma).

\begin{example}
	\label{eg:pionevert_of_projection}

	Consider the projection map $\pr_X\colon X\times F \to X$ where $X$ is locally 1-connected and $F$ is semilocally 1-connected. 
	Then there is an isomorphism of topological groupoids over $X$,
	\[
		\pionevert(\pr_X) \simeq X\times \Pi_1(F),
	\]
	where $\Pi_1(F)$ is given the usual topology. In particular, if $F = I$ the unit interval
	\[
		\pionevert(\pr_X) \simeq X\times \codisc(I),
	\]
	where $codisc(I)$ is the topological groupoid with object space $I$ and a unique arrow between any two points in $I$.

\end{example}

\begin{lemma}

	For semilocally 1-connected maps $p\colon Y\to X$ and $q\colon Z\to X$ there is a natural isomorphism of topological groupoids
	\[
		\pionevert(Y\times_X Z \to X) \simeq \pionevert(p)\times_X \pionevert(q)
	\]
	over $X$.

\end{lemma}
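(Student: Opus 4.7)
The plan is to construct the isomorphism explicitly from the universal property of the pullback and to verify it is a homeomorphism of arrow spaces by comparing basic open neighbourhoods. Both sides have the same object set, namely $Y\times_X Z$. For arrows, the universal property of the pullback gives a bijection $[I, Y \times_X Z] \cong [I,Y] \times_{[I,X]} [I,Z]$, which is a homeomorphism for the compact-open topology since mapping out of $I$ commutes with limits (and subspace topologies pass through $[I,-]$). Restricting to vertical paths yields $[I, Y\times_X Z]_{vert} \cong [I,Y]_{vert} \times_X [I,Z]_{vert}$. The same argument with $I^2$ in place of $I$ shows that vertical homotopies $I^2 \to (Y\times_X Z)_U = Y_U \times_U Z_U$ split as pairs of vertical homotopies in $Y_U$ and $Z_U$ with matching projection to $U$. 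Thus the vertical-homotopy equivalence factors, giving a bijection of arrow sets manifestly compatible with source, target, units, inverses and composition.

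Continuity in the forward direction is then immediate: the projections $Y \times_X Z \to Y$ and $Y \times_X Z \to Z$ are maps over $X$ and so induce continuous functors of vertical fundamental groupoids (functoriality of $\pionevert$ on $\Top/X$ is clear from the definition of basic opens), and their pair gives the forward map into $\pionevert(p) \times_X \pionevert(q)$. For the inverse, consider a basic neighbourhood $\langle [(f,g)], U, V_0, V_1\rangle$. Since a basis for the subspace topology on $Y\times_X Z$ is given by sets of the form $A\times_X B$ with $A\subset Y$, $B\subset Z$ open, we may take $V_i = V_i^Y \times_X V_i^Z$ for basic opens $V_i^Y \subset Y$, $V_i^Z \subset Z$. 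The splitting of homotopies above then shows that this basic neighbourhood corresponds under the bijection of arrow sets to the pullback $\langle[f], U, V_0^Y, V_1^Y\rangle \times_X \langle[g], U, V_0^Z, V_1^Z\rangle$, which is a basic open of $\pionevert(p)_1 \times_X \pionevert(q)_1$. Hence the inverse is also continuous, yielding the claimed homeomorphism.

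Naturality in the cospan $(p, q)$ follows because every ingredient used is natural: the universal property of the pullback and the functoriality of $\pionevert$ on $\Top/X$. The main obstacle is the bookkeeping comparison of basic neighbourhoods; the key supporting observation is that mapping spaces commute with the pullback $Y \times_X Z$ and that $(Y\times_X Z)_U = Y_U \times_U Z_U$, after which matching the topologies is routine. Notice that, unlike the continuity of multiplication, this proof does not require any semilocal 1-connectedness hypothesis beyond what is already built into the definition of $\pionevert$ on each side, since no lifting problem needs to be solved: the homotopies are given on both sides and one just splits or pairs them using the pullback universal property.
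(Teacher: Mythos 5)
The paper states this lemma without proof, so your argument has to stand on its own. The object-level identification, the splitting of vertical paths via $[I,Y\times_X Z]\cong [I,Y]\times_{[I,X]}[I,Z]$, and the continuity of the forward functor (via functoriality of $\pionevert$ over $X$ and the universal property of the fibre product) are all fine. The gap is in the step where you claim that $\langle[(f,g)],U,V_0^Y\times_X V_0^Z,V_1^Y\times_X V_1^Z\rangle$ \emph{corresponds} to $\langle[f],U,V_0^Y,V_1^Y\rangle\times_X\langle[g],U,V_0^Z,V_1^Z\rangle$. Only one containment is routine: a homotopy $H\colon I^2\to (Y\times_X Z)_U$ splits as a pair $(H_Y,H_Z)$ with $p\circ H_Y=q\circ H_Z$, so the image of the left-hand set lies in the right-hand set. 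The reverse containment --- which is exactly what continuity of the inverse requires --- asks you to take \emph{independently given} homotopies $H_Y\colon I^2\to Y_U$ and $H_Z\colon I^2\to Z_U$ witnessing membership in the two factors and assemble them into a single homotopy valued in $Y\times_X Z$. The pair $(H_Y,H_Z)$ lands in $Y\times_X Z$ only if $p\circ H_Y=q\circ H_Z$ as maps $I^2\to U$, and nothing in the definition of the two basic opens forces their projections to agree. Your own phrase ``with matching projection to $U$'' names precisely the condition that is then silently dropped when you pass from splitting homotopies to pairing them.

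This is also where your closing remark goes wrong: you assert that no semilocal 1-connectedness is needed and ``no lifting problem needs to be solved,'' but the lemma hypothesises sl1c for \emph{both} maps (and the paper's standing convention makes $X$ locally 1-connected), and these hypotheses are what one expects to use to repair the pairing step. The natural route is to shrink $U$ to a 1-connected basic open, homotope $p\circ H_Y$ and $q\circ H_Z$ rel $\{0,1\}\times I$ to a common map $I^2\to U$ (possible since $U$ is 1-connected), and then realise these deformations upstairs in $Y_U$ and $Z_U$; that last step is a genuine lifting argument, and it needs care because the sl1c condition only directly provides fillers of squares rel their boundary, not lifts of three-dimensional deformations rel a face. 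As written, the proof establishes continuity of the comparison functor but not that it is open, so the claimed homeomorphism of arrow spaces is not yet proved.
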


Let $p_i\colon Y_i \to X$, $i=1,2$, be sl1c maps over a locally 1-connected space $X$.
Given a map $Y_1 \to Y_2$ over $X$, there is clearly a functor
\[
	\pionevert(p_1) \to \pionevert(p_2)
\]
which can easily be seen to be continuous. 
Thus $\pionevert$ is functorial for maps in the slice category $\Top^h/_{sl1c}X$, the full subcategory of $\Top/X$ on the sl1c maps. 
Also, if we have a commuting square
\[
	\xymatrix{
		Y_1 \ar[r] \ar[d]_{q_1} & Y_2 \ar[d]^{q_2}\\
		X_1 \ar[r] & X_2
	}
\]
where the $X_i$ are locally 1-connected and the $p_i$ are sl1c, we have a commuting square of functors
\[
	\xymatrix{
		\pionevert(q_1) \ar[r] \ar[d]_{q_1} & \pionevert(q_2) \ar[d]^{q_2}\\
		X_1 \ar[r] & X_2
	}
\]

Returning to our fixed base space $X$, given a homotopy $H\colon I\times Y_1\to Y_2$ over $X$ we can use example \ref{eg:pionevert_of_projection} to get a functor
\[
	\mathbf{2}\times \pionevert(p_1) \hookrightarrow 
	\codisc(I)\times \pionevert(p_1)
	\simeq \pionevert(p_1\circ \pr_{Y_1})
	\to \pionevert(p_2)
\]
over $X$, where $\mathbf{2} = \codisc(\mathbf{2})$ is the groupoid with two objects and a unique isomorphism between them, and the inclusion into $\codisc(I)$ maps those two objects to the endpoints of $I$. 
This is nothing but a natural transformation between the two functors induced by the maps $H(0,-)$ and $H(1,-)$.  
In fact $\pionevert$ respects more structure than this, in that it is a \emph{simplicial functor} for simplicial enrichments arising from the usual ones on $\Top$ and $\Gpd$. 
We give for completeness the definitions of the simplicial enrichments we consider on the categories $\Top/_{sl1c}X$ and $\TopGpd/X$:

\begin{itemize}

	\item[--]
	The simplicial hom in $\Top/_{sl1c}X$ is given by
	\[
		[Y_1,Y_2]^{\Top/X}_n = \Top_{/X}(\Delta^n \times Y_1,Y_2),
	\]
	where $\Delta^n$ is the standard topological $n$-simplex.

	\item[--]
	The simplicial hom in $\TopGpd/X$ is given by
	\[
		[Z_1,Z_2]^{\TopGpd/X}_n = \TopGpd_{/X}(\codisc(\mathbf{n})\times Z_1,Z_2),
	\]
	where $\codisc(\mathbf{n})$ is the contractible groupoid with $n$ objects.

\end{itemize}

Both simplicial enrichments arise from a cosimplicial structure: in the first case the standard cosimplicial structure on $\Delta^\bullet$, in the second case the groupoids $\mathbf{n}$ likewise assemble into a cosimplicial groupoid $\codisc(\bullet)$.
The functor $\Pi_1$ sends the cosimplicial space $\Delta^\bullet$ to the cosimplicial groupoid $\Pi_1(\Delta^\bullet)$.
There is a functor $\codisc(\mathbf{n}) \to \Pi_1(\Delta^n)$ sending each object to a vertex giving a map of cosimplicial groupoids.

\begin{proposition}

	$\pionevert$ is a simplicial functor $\Top/_{sl1c}X \to \TopGpd/X$.

\end{proposition}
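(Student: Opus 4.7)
The plan is to extend the $n=1$ construction described in the paragraph immediately preceding the proposition. For each $n$ I define
\[
    \pionevert_n \colon [Y_1, Y_2]^{\Top/X}_n \to [\pionevert(p_1), \pionevert(p_2)]^{\TopGpd/X}_n
\]
by sending $H \colon \Delta^n \times Y_1 \to Y_2$ over $X$ to the composite
\[
    \codisc(\mathbf{n}) \times \pionevert(p_1) \to \Pi_1(\Delta^n) \times \pionevert(p_1) \xrightarrow{\cong} \pionevert(\Delta^n \times Y_1 \to X) \xrightarrow{\pionevert(H)} \pionevert(p_2),
\]
where the first arrow is the product of $\pionevert(p_1)$ with the cosimplicial map $\codisc(\bullet) \to \Pi_1(\Delta^\bullet)$ identified in the text, and the final arrow comes from the previously established functoriality of $\pionevert$ on sl1c maps.

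The central ingredient is the middle isomorphism. To construct it, I rewrite $\Delta^n \times Y_1 \cong (\Delta^n \times X) \times_X Y_1$ as spaces over $X$. Since $\Delta^n$ is locally contractible, $\Delta^n \to \ast$ is sl1c and hence by Lemma~\ref{lemma:sl1c_local_on_base} its pullback $\Delta^n \times X \to X$ is sl1c; the cospan lemma then gives a natural isomorphism $\pionevert(\Delta^n \times Y_1 \to X) \cong \pionevert(\Delta^n \times X \to X) \times_X \pionevert(p_1)$, and Example~\ref{eg:pionevert_of_projection} identifies the first factor with $X \times \Pi_1(\Delta^n)$. This gives the required isomorphism, and also shows the source object $\Delta^n \times Y_1 \to X$ lies in $\Top/_{sl1c}X$ so that $\pionevert(H)$ is defined.

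With $\pionevert_n$ in hand, the simplicial functor axioms decompose into two checks. Naturality in $[n]$ is essentially automatic: given $\theta \colon [m] \to [n]$, cosimpliciality of $\codisc(\bullet) \to \Pi_1(\Delta^\bullet)$ together with the cosimplicial naturality of the middle isomorphism (which follows from the functoriality of both the cospan lemma and Example~\ref{eg:pionevert_of_projection} in the $\Delta^\bullet$ argument) gives the required square. Compatibility with composition is the substantive check: in both simplicial categories, composition $(f,g) \mapsto g \circ (\id \times f) \circ (\delta \times \id)$ is precomposition with the diagonal of $\Delta^n$ or $\codisc(\mathbf{n})$ respectively. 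Since the cosimplicial comparison map $\codisc(\mathbf{n}) \to \Pi_1(\Delta^n)$ respects diagonals and the middle isomorphism is natural in both cospan arguments, a diagram chase applied to $\pionevert(\id_{\Delta^n} \times H)$ and the diagonal $\Delta^n \to \Delta^n \times \Delta^n$ produces the identity relating $\pionevert_n(g \circ (\id \times f) \circ (\delta \times \id))$ to $\pionevert_n(g) \circ (\id \times \pionevert_n(f)) \circ (\delta \times \id)$.

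I expect the main obstacle to be purely bookkeeping: making precise the naturality of the isomorphism $\pionevert(\Delta^n \times Y_1 \to X) \cong \Pi_1(\Delta^n) \times \pionevert(p_1)$ both as a cosimplicial statement in $\Delta^\bullet$ and with respect to the diagonals used in composition. Once one records that the cospan isomorphism of the earlier lemma is natural in each cospan leg and that Example~\ref{eg:pionevert_of_projection} is natural in $F$, every remaining step is a straightforward identification of composites.
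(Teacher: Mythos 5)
Your proposal is correct and follows essentially the same route as the paper: the same assignment of the $n$-simplex $\Delta^n\times Y_1\to Y_2$ to the composite through $\codisc(\mathbf{n})\times\pionevert(p_1)$, simpliciality via the cosimplicial map $\codisc(\bullet)\to\Pi_1(\Delta^\bullet)$, and compatibility with composition via naturality with respect to the diagonal. The only difference is that you spell out the middle isomorphism $\pionevert(\Delta^n\times Y_1\to X)\simeq \Pi_1(\Delta^n)\times\pionevert(p_1)$ explicitly from the cospan lemma and Example \ref{eg:pionevert_of_projection}, a detail the paper leaves implicit.
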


\begin{proof}

	We will describe the map on the hom-simplicial sets and show it respects composition; the rest is safely left to the reader. 
	Given an $n$-simplex in $[p_1,p_2]^{\Top/X}$, namely a map
	\[
		\Delta^n \times Y_1\to Y_2
	\]
	over $X$, send it to the $n$-simplex
	\begin{align*}
		\codisc(\mathbf{n})\times \pionevert(p_1) \hookrightarrow \codisc(\Delta^n)\times \pionevert(p_1)
	&\simeq \pionevert(p_1\circ \pr_{Y_1})\\
	& \to \pionevert(p_2)
	\end{align*}
	in $[\pionevert(p_1),\pionevert(p_2)]^{\TopGpd/X}$.
	This is a map of simplicial sets because $\codisc(\bullet) \to \Pi_1(\Delta^\bullet)$ is a cosimplicial map.

	Now given a pair of composable $n$-simplices in $\Top/_{sl1c}X$, say
	\begin{align*}
		f\colon &\Delta^n \times Y_1\to Y_2\\
		g\colon &\Delta^n \times Y_2\to Y_3
	\end{align*}
	their composite is the $n$-simplex
	\[
		\Delta^n \times Y_1 \xrightarrow{\Delta\times \id} \Delta^n\times \Delta^n \times Y_1 \xrightarrow{\id \times f} \Delta^n \times Y_2 \xrightarrow{g} Y_3.
	\]
	That composition is respected follows from the commutativity of the diagram
	\[
		\xymatrix{
			\codisc(\mathbf{n}) \ar[r]^\Delta \ar[d] & \codisc(\mathbf{n}\times \mathbf{n}) \ar[d]\\
			\codisc(\Delta^n) \ar[r]_-\Delta & \codisc(\Delta^n\times \Delta^n)
		}
	\]
	and canonical isomorphisms.
\end{proof}

Note that we also have the $\pionevert$ gives a functor from the arrow category of $\Top$ (or rather the full subcategory on the sl1c maps) to the arrow category of $\TopGpd$, preserving cartesian squares (as shown by the following proposition).

\begin{proposition}

	For a map $f\colon W\to X$ with both $W$ and $X$ locally 1-connected, and a semilocally 1-connected map $p\colon Y\to X$ there is a natural isomorphism
	\[
		\pionevert(f^*(p)) \stackrel{\simeq}{\to} 
			\disc(W)\times_X \pionevert(p).
	\]

\end{proposition}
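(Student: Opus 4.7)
My plan is to define the functor $\phi\colon\pionevert(f^*(p))\to\disc(W)\times_X\pionevert(p)$ as the identity on the common object space $W\times_X Y$ and, on arrows, by $\phi[(w,\gamma)]=(w,[\gamma])$. A vertical path in $W\times_X Y$ over $W$ is uniquely of the form $t\mapsto(w,\gamma(t))$ for $w\in W$ constant and $\gamma\colon I\to Y_{f(w)}$ a vertical path over $X$ (since $p\gamma(t)=f(w)$ is constant), and vertical homotopies push forward in the same way. Thus $\phi$ is a strict isomorphism of the underlying groupoids, with set-theoretic inverse $(w,[\gamma])\mapsto[t\mapsto(w,\gamma(t))]$; I would verify compatibility with source, target, composition, and inverses directly from this formula.

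For continuity of $\phi$, given a basic open $A\times\langle[\gamma_0],U,B_0,B_1\rangle\cap(W\times_X\pionevert(p)_1)$ and an element $[(w_1,\gamma_1)]$ in its preimage, I would take $U'=A\cap f^{-1}(U)$ (basic in $W$, possibly after refining by local 1-connectedness of $W$) and $V'_\epsilon=(U'\times B_\epsilon)\cap(W\times_X Y)$, and check that $\langle[(w_1,\gamma_1)],U',V'_0,V'_1\rangle$ lies inside the preimage. This is because $\pr_Y$ applied to a witnessing homotopy $h\colon I^2\to U'\times_X Y$ produces a homotopy in $p^{-1}(U)$ with sides in $B_\epsilon$, and concatenating with any homotopy from $\gamma_0$ to $\gamma_1$ exhibits $(w,[\gamma])\in A\times\langle[\gamma_0],U,B_0,B_1\rangle$.

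The hard direction is openness. After refining $U'$ to a basic 1-connected $U''\subset U'$ around $w_0$ with $f(U'')\subset U$ for some basic 1-connected $U\subset X$ (using local 1-connectedness of $W$ and $X$), and $V''_\epsilon=(U''\times B_\epsilon)\cap(W\times_X Y)\subset V'_\epsilon$, the claim is that every $(w,[\gamma])$ in the basic open $(U''\times\langle[\gamma_0],U,B_0,B_1\rangle)\cap(W\times_X\pionevert(p)_1)$ is $\phi[(w,\gamma)]$ for some $[(w,\gamma)]\in\langle[(w_0,\gamma_0)],U'',V''_0,V''_1\rangle$. Given the witnessing homotopy $H\colon I^2\to p^{-1}(U)$ from $\gamma_0$ to $\gamma$, I would pick a path $\omega\colon I\to U''$ from $w_0$ to $w$, use simple connectedness of $U$ to produce a homotopy $\Phi\colon I^3\to U$ from $p\circ H$ to the constant-in-$t$ map $(s,t)\mapsto f\omega(s)$ relative to the vertical sides $s\in\{0,1\}$, and lift $\Phi$ to $Y$ starting from $H$ on the face $r=0$ via sl1c of $p$, applied cell-by-cell to a subdivision of $I^3$ whose image cells each lie in a pair $(N,U_{\mathrm{loc}})$ from the sl1c condition. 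The slice $\tilde H$ of the lift at $r=1$ then satisfies $p\circ\tilde H(s,t)=f\omega(s)$, so $h(s,t)=(\omega(s),\tilde H(s,t))$ lies in $U''\times_X Y$ and is the required homotopy. The main obstacle is the careful bookkeeping of boundary matching in this subdivided three-dimensional lift, in particular ensuring the resulting $\tilde H$ restricts to $\gamma_0$ and $\gamma$ on $s=0$ and $s=1$ respectively. Naturality in $f$ and $p$ follows directly from the explicit definition of $\phi$.
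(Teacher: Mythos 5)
The paper states this proposition without giving a proof, so there is nothing to compare your argument against directly; judged on its own terms, your first two paragraphs are fine. The identification of the underlying groupoids is correct: a vertical path in $W\times_X Y$ over $W$ has constant $W$-component, hence is a pair $(w,\gamma)$ with $\gamma$ vertical over $X$, and the same applies to vertical homotopies. The continuity direction also works: projecting a witnessing homotopy $h\colon I^2\to U'\times_X Y$ to $Y$ lands in $p^{-1}(U)$ with tracks in $B_\epsilon$, and concatenation with a homotopy from $\gamma_0$ to $\gamma_1$ does the rest (modulo the harmless shrinking to basic opens that you flag).

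The gap is in the openness direction, and it is exactly at the step you defer. The sl1c condition is a two-dimensional lifting property: it fills a map $I^2\to U$ given a lift of the \emph{entire} boundary $\partial I^2$ into a single small neighbourhood $N$. Your plan to lift $\Phi\colon I^3\to U$ ``cell-by-cell over a subdivision of $I^3$'' does not reduce to this. First, to run a cell-by-cell argument you must construct the lift on the $1$-skeleton of the subdivision before filling $2$-cells, and that requires lifting paths in $U$ to paths in $Y$ with prescribed endpoints; $p$ is not a fibration and sl1c gives no path-lifting. Second, if the cells in question are genuinely the $3$-cells of a subdivision of $I^3$, filling them is a $\pi_2$-type extension problem ($S^2\to Y$ extended over $D^3$) which sl1c does not address at all. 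Third, even granting some lift, the properties you need of the slice $\tilde H$ at $r=1$ --- that it restricts to $\gamma_0$ and $\gamma$ exactly on $s=0,1$ and that its tracks $\tilde H(-,\epsilon)$ stay inside $B_\epsilon$ --- are precisely boundary constraints that a generic application of the lifting property does not respect; note that the boundary circle of the face you actually need carries the full-length paths $\gamma_0$ and $\gamma$, so it cannot be confined to one small $N$ and a single application of sl1c is ruled out. The overall strategy (deform $p\circ H$ rel the vertical sides to a map factoring through $U''$, using $\pi_1(U)=0$, and lift the deformation) is a reasonable one, but as written the lifting step is asserted rather than performed, and it is the entire content of the proposition beyond bookkeeping. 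To close the gap you would need either an additional hypothesis (e.g.\ local triviality, as in the thesis case the paper says it generalises) or a genuinely more careful inductive lifting argument that builds the required homotopy strip by strip using only boundary data already available in $Y$.
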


More concretely, it means we can pull back vertical fundamental groupoids along any map. In particular one can restrict to open subsets of the codomain.

\section{Applications}

In the section below we use a coarser notion of equivalence of topological groupoids, inspired by the theory of stacks, and which has appeared in many different guises (see for instance \S 8 of \cite{Roberts_12} for history and examples).

\begin{definition}

	A functor $f\colon D\to E$ between topological groupoids is a \emph{weak equivalence} if $D_1 \simeq D_0^2 \times_{E_0^2} E_1$ (full faithfulness) and 
	\[
		t\circ pr_2\colon D_0\times_{E_0} E_1 \to E_0
	\]
	admits local sections (essential surjectivity). 
	A pair of groupoids are \emph{weakly equivalent} if there is a span of weak equivalences between them.

\end{definition}

An internal equivalence of topological groupoids, that is a pair of functors which are quasi-inverse, gives an example of a weak equivalence. Another example is the induced functor $\check{C}(V) \to \check{C}(U)$ arising from a map $V\to U$ of open covers of a space $X$. This latter includes the case when $U=X$, so that $\check{C}(V) \to X$ is a weak equivalence.

\begin{example}
	
	Let $X$ be a semilocally 1-connected space. 
	Then the functor
	\[
		\Pi_1(X)^{disc} \to \Pi_1(X)^{top}
 	\]
 	is a weak equivalence. 
 	Here the superscripts indicate the discrete topology and the usual topology as given in \cite{Brown-Danesh-Naruie_75}.
 	This functor is bijective on objects and arrows, but is not an equivalence except in degenerate situations.

\end{example}

\subsection{Fibrewise universal covering spaces}

The universal covering space $\widetilde{X} \to X$ of a connected, semilocally 1-connected space $X$ is isomorphic to the source fibre $s^{-1}(x)$ of the fundamental groupoid $\Pi_1(X)$ at an object $x\in X$. 
In the fibrewise setting, with all fibres connected, this object becomes a section of the sl1c map at hand. 
Borrowing the language of topos theory, one might call this a \emph{global} point in the category of spaces over $X$. 
A map posessing a section is called an \emph{ex-space} in the parameterised homotopy theory literature (see e.g.\ \cite{Crabb-James_98}).
The reader will of course immediately appreciate that not all maps of interest come with sections. 
However, different basepoints in a connected space give isomorphic universal covering spaces, and the existence of enough \emph{local} sections will be enough to define a fibrewise universal covering space. 
The author does not know whether more general input will also yield a fibrewise universal covering space, but expects it may well be possible.

Fix an ex-space $p\colon Y\to X$ with section $s\colon Y\to X$ such that $p$ is an sl1c map and $X$ is locally 1-connected.
We can form the groupoid $\disc(X)\times_{\disc(Y)}\pionevert(p)$ with objects $X$. 
This has source and target maps	equal, so gives a bundle of groups over $X$.
The fibre over $x\in X$ is the fundamental group of $Y_x$ at the point $s(x)$, and one can think of this bundle as a discrete group object in spaces over $X$.
If all the fibres of $p$ are path connected, then this captures the homotopy 1-types of the fibres. 
In this case, the fibrewise universal covering space, defined below, will be a principal bundle in the category of spaces over $X$ with structure group this relative $\pi_1$.

We will consider not just ex-spaces, but spaces with a collection of local sections that among themselves satisfy a homotopy coherence condition.

\begin{definition}

	A \emph{coherent local splitting} of a map $p\colon Y \to X$ is an open cover $U = \coprod_i U_i \to X$ and a collection of local sections $s_i\colon U_i \to Y$ together with vertical homotopies $h_{ij}$ from $s_i\big|_{U_{ij}}$ to $s_j\big|_{U_{ij}}$ that satisfy the cocycle equation
	\[
		h_{ij} + h_{ij} \simeq h_{ik}
	\]
	up to a vertical homotopy (the sum denotes the composite homotopy). It is also assumed that $h_{ii}$ is the constant homotopy. A \emph{locally ex-space} is a map with a specified coherent local splitting.

\end{definition}

There are many examples such locally ex-spaces, arising from certain fibrations. Given $Y_i \to X$, $i=1,2$, a \emph{fibre homotopy equivalence} between $Y_1$ and $Y_2$ is a pair of maps $f\colon Y_1 \leftrightarrows Y_2 \cocolon g$ over $X$, such that $f\circ g$ and $g\circ f$ are homotopic over $X$ to identity maps. 

\begin{definition}

	A \emph{locally homotopy trivial fibration} is a fibration $p\colon E\to B$ such that there is an open cover $\{U_i \to B\}$, and fibre homotopy equivalences between $E\big|_{U_i}$ and $U_i\times F$ for each $i$, for some space $F$.

\end{definition}

\begin{example}
	
	A locally homotopy trivial fibration $p\colon E \to B$, say with path-connected fibres, can be given the structure of a locally ex-space.

\end{example}

In this section we shall focus on locally ex-spaces that have underlying sl1c maps. 
These are, in the author's opinion, the `correct' fibrewise generalisation of a pointed semilocally 1-connected space.
We shall also assume that the codomains of all sl1c maps are locally 1-connected.

As a point of notation, recall that given an open cover $U = \coprod_i U_i \to M$ of a space $M$, there is a topological groupoid $\check{C}(U) \to M$ over $M$, called the \emph{\v Cech groupoid} of the cover.

\begin{lemma}
	
	Given an sl1c locally ex-space $Y \to X$, with open cover $U\to X$, and with $X$ locally 1-connected, there is a continuous functor $s\colon \check{C}(U) \to \pionevert(p)$ such that $p\circ s$ is the canonical projection $\check{C}(U) \to X$.

\end{lemma}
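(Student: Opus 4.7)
The plan is to read off the functor $s$ directly from the coherent local splitting data. On objects, set $s_0 := \coprod_i s_i \colon U = \coprod_i U_i \to Y$; this is continuous, and $p \circ s_0$ equals the canonical projection $U \to X$ by the defining property of local sections. On arrows $\check{C}(U)_1 = U \times_X U = \coprod_{i,j} U_{ij}$, define $s_1$ on each component $U_{ij}$ by sending $x$ to the vertical homotopy class $[h_{ij}(x,-)] \in \pionevert(p)_1$. Compatibility with source and target maps is immediate from $h_{ij}(x,0) = s_i(x)$ and $h_{ij}(x,1) = s_j(x)$.

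For continuity of $s_1$, I would invoke the earlier lemma that $[I,Y]_{vert} \to \pionevert(p)_1$ is continuous, so that it suffices to produce a continuous lift $U_{ij} \to [I,Y]_{vert}$ on each component, sending $x$ to $h_{ij}(x,-)$. But this map is simply the exponential adjoint of the continuous vertical homotopy $h_{ij}\colon U_{ij} \times I \to Y$, and so is continuous because $I$ is compact Hausdorff. Disjoint unions then deliver a continuous $s_1$ defined on all of $\check{C}(U)_1$.

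Functoriality splits into two checks. Unitality follows from the hypothesis that each $h_{ii}$ is the constant homotopy at $s_i$, so that $[h_{ii}(x,-)]$ is the identity arrow at $s_0(x) = s_i(x)$. For composition, given composable arrows $(x,i,j)$ and $(x,j,k)$ with composite $(x,i,k)$, the required equality $[h_{ij}(x,-)] \cdot [h_{jk}(x,-)] = [h_{ik}(x,-)]$ in $\pionevert(p)_1$ is exactly the cocycle equation $h_{ij} + h_{jk} \simeq h_{ik}$ (up to vertical homotopy) that is part of the data of a coherent local splitting. Finally, $p \circ s$ agrees with the canonical projection $\check{C}(U) \to X$ on arrows because every $h_{ij}$ is fibrewise.

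No step is a genuine obstacle: the lemma is essentially a repackaging of the coherent local splitting in groupoid-theoretic language. The sl1c hypothesis on $p$ and the local 1-connectedness of $X$ enter only indirectly, via their earlier role in ensuring that $\pionevert(p)$ is a topological groupoid and that the map $[I,Y]_{vert} \to \pionevert(p)_1$ through which the continuity argument factors is well-behaved.
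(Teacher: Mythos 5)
Your proof is correct and follows essentially the same route as the paper's: define the arrow component via the adjoints $U_i\times_X U_j \to [I,Y]_{vert}$ of the homotopies $h_{ij}$, compose with the continuous quotient map $[I,Y]_{vert}\to \pionevert(p)_1$, and deduce functoriality from the up-to-homotopy cocycle equation and the assumption on $h_{ii}$. Your write-up merely makes explicit the exponential-adjunction step and the source/target checks that the paper leaves implicit.
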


\begin{proof}
	
	Since the map $[I,Y]_{vert} \to \pionevert(p)_1$ is continuous, and we are given the homotopies $U_i\times_X U_j \to [I,Y]_{vert}$, the obvious maps $U_i\times_X U_j \to \pionevert(p)_1$ are continuous; these then give the arrow components of the functor $\check{C}(U) \to \pionevert(p)$. 
	That composition is preserved is due to the up-to-homotopy cocycle equation. 
	Preservation of identities follows from the assumption on $h_{ii}$.
\end{proof}

The putative fibrewise universal covering space can be defined as the space of orbits of a certain equivalence relation; a groupoid with at most one arrow between any two objects. Recall that for a (topological) groupoid $C$, the groupoid $C^\mathbf{2}$ has as objects the arrows of $C$, and as arrows commutative squares. There are two functors $S,T\colon C^\mathbf{2} \to C$ sending an arrow to its source and target respectively. The equivalence relation is in itself important for proving properties, so we shall proceed in several steps.

Fix a functor $s\colon\check{C}(U) \to \pionevert(p)$. First, define the iterated pullback
\[
	\xymatrix{
		\widehat{Y}^{(1),s} \ar[rr] \ar[dd] && Y \ar[d]\\
		& \pionevert(p)^\mathbf{2} \ar[r]_T \ar[d]^S & \pionevert(p)\\
		\check{C}(U) \ar[r]_-{s} & \pionevert(p)
	}\;.
\]
This is a topological groupoid with a functor to $Y$. The objects are pairs consisting of homotopy classes of vertical paths $\gamma$ in $Y$ together with an $x\in U$ such that $\gamma(0) = s(x)$. The morphisms $(\gamma_1,x_1) \to (\gamma_2,x_2)$ are commutative triangles in $\pionevert(p)$ with two sides $\gamma_1$ and $\gamma_2$, together with a lift of the third side, $\gamma_1\gamma_2^{-1}$, to a map $x_1 \to x_2$ in $\check{C}(U)$. Moreover, this lift, if it exists, is unique. Thus there is at most one arrow between any two objects in $\widehat{Y}^{(1),s}$, making it an equivalence relation.

Secondly, if we take a open cover $i\colon V\to U$ of the open cover $U\to X$, we get a new coherent local splitting by precomposing with $i$, and so a new functor $s' = s\circ i$.

\begin{lemma}
	
	The canonical functor $\widehat{Y}^{(1),s'} \to \widehat{Y}^{(1),s}$ over $Y$ is a weak equivalence. 

\end{lemma}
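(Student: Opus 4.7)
The plan is to verify the two conditions in the definition of weak equivalence, namely full faithfulness and essential surjectivity, and to exploit the fact that both groupoids are equivalence relations, so that most of the work is contained in understanding the object spaces.

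First I would unpack the relevant pullbacks. The object space of $\widehat{Y}^{(1),s}$ is $\pionevert(p)_1 \times_Y U$, where $\pionevert(p)_1 \to Y$ is the source evaluation and $U \to Y$ is $s$ on objects; similarly $\widehat{Y}^{(1),s'}_0 = \pionevert(p)_1 \times_Y V$ using $s' = s \circ i$. The map $F_0$ induced on objects is thus the pullback of $i \colon V \to U$ along $\pionevert(p)_1 \to Y$. For \emph{essential surjectivity}, I would argue as follows: since $i$ is an open cover, it admits local sections; a local section $\sigma \colon W \to V$ of $i$ near a point $x \in U$ pulls back to a local section of $F_0$ near any $(\gamma,x)$, by sending $(\gamma',x')$ with $x' \in W$ to $(\gamma', \sigma(x'))$ (this is well-defined because $s'(\sigma(x')) = s(x') = \gamma'(0)$). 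Combining such a section with the identity arrows of $\widehat{Y}^{(1),s}$ gives the required local sections of the essential surjectivity comparison map $\widehat{Y}^{(1),s'}_0 \times_{\widehat{Y}^{(1),s}_0} \widehat{Y}^{(1),s}_1 \to \widehat{Y}^{(1),s}_0$.

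For \emph{full faithfulness}, I would use that $\widehat{Y}^{(1),s'}$ and $\widehat{Y}^{(1),s}$ are both equivalence relations: an arrow $(\gamma_1,v_1) \to (\gamma_2,v_2)$ in $\widehat{Y}^{(1),s'}$ exists (and is unique) precisely when $\gamma_1(1) = \gamma_2(1)$, $p(v_1) = p(v_2)$, and $s'(v_1,v_2) = \gamma_2^{-1}\cdot \gamma_1$ in $\pionevert(p)$, and likewise for $\widehat{Y}^{(1),s}$ using $s$ and $(i(v_1),i(v_2))$. Since $s \circ i = s'$, the two conditions coincide, so the comparison $\widehat{Y}^{(1),s'}_1 \to \widehat{Y}^{(1),s'}_0 \times_{\widehat{Y}^{(1),s}_0} \widehat{Y}^{(1),s}_1 \times_{\widehat{Y}^{(1),s}_0} \widehat{Y}^{(1),s'}_0$ is a bijection. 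To see it is a homeomorphism I would note that both sides sit in the product $\widehat{Y}^{(1),s'}_0 \times \widehat{Y}^{(1),s'}_0$ as the subspace cut out by the same closed equation, with the Čech component on the right side determined as the graph of $V \times_X V \to U \times_X U$.

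The main obstacle, such as it is, lies in the topological bookkeeping for full faithfulness: one needs to confirm that the topology $\widehat{Y}^{(1),s'}_1$ inherits as an iterated pullback agrees with the topology of the above pullback in $\widehat{Y}^{(1),s}_1$. This is a routine diagram chase once one observes that the map $V \times_X V \to U \times_X U$ sits over the graph of $i \times i$, so the pullback against $V^2 \to U^2$ projects homeomorphically onto $V \times_X V$; no further hypotheses on $p$ beyond those already in force are needed.
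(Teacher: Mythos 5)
Your argument is correct, but it takes a more hands-on route than the paper. The paper's proof is a two-line appeal to a general stability principle: the functor $\widehat{Y}^{(1),s'} \to \widehat{Y}^{(1),s}$ is obtained by pulling back $\check{C}(V) \to \check{C}(U)$ along the rest of the iterated-pullback diagram, $\check{C}(V) \to \check{C}(U)$ is a weak equivalence whose object component is an open cover, and weak equivalences of that kind are stable under pullback. You instead verify the two conditions of the definition directly: essential surjectivity from local sections of $i\colon V\to U$ combined with identity arrows (which is exactly the content of the pullback-stability argument, unwound), and full faithfulness from the observation that both groupoids are equivalence relations whose arrow existence is governed by the same condition, with the homeomorphism on arrow spaces reducing to the fact that $V\times_X V \cong V^2\times_{U^2}(U\times_X U)$, i.e.\ to full faithfulness of $\check{C}(V)\to\check{C}(U)$. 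Both proofs are sound; the paper's is shorter and immediately reusable for the neighbouring claim that a natural isomorphism $s_1\Rightarrow s_2$ also induces a weak equivalence, while yours is self-contained and makes explicit the topological bookkeeping (in particular that no new hypotheses on $p$ are needed), at the cost of not isolating the reusable general lemma. One cosmetic caveat: the condition cutting out the arrow space is a pullback (equalizer) condition, not literally a ``closed equation'' unless the relevant spaces are Hausdorff, but nothing in your argument depends on closedness.
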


The proof follows from the fact $\check{C}(V) \to \check{C}(U)$ is a weak equivalence with an open cover as object component, and such weak equivalences pull back to weak equivalences. 
Also, if we take a natural isomorphism $s_1 \Rightarrow s_2 \colon\check{C}(U) \to \pionevert(p)$, then there is a canonical functor $\widehat{Y}^{(1),s_1} \to \widehat{Y}^{(1),s_2}$ over $Y$, and this functor is a weak equivalence.

\begin{lemma}

	Given a weak equivalence $A\to B$ of topological groupoids over $Y$, there is an induced isomorphism $A_0/A_1 \to B_0/B_1$ of spaces of orbits, again commuting with the maps to $Y$. 

\end{lemma}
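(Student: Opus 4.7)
The plan is to verify that the map $\bar{f}\colon A_0/A_1 \to B_0/B_1$ induced from $f_0\colon A_0\to B_0$ by the universal property of the quotient is a homeomorphism over $Y$. The two defining axioms of a weak equivalence will be used for exactly one task each: full faithfulness ensures injectivity of $\bar{f}$, while the local-section formulation of essential surjectivity ensures both surjectivity of $\bar{f}$ and continuity of the inverse. Throughout, write $q_A\colon A_0\to A_0/A_1$ and $q_B\colon B_0\to B_0/B_1$ for the quotient maps.

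First, since $f$ is a functor, $q_B\circ f_0$ is constant on $A_1$-orbits, so it factors through a continuous map $\bar{f}$. Bijectivity then proceeds as follows. Surjectivity of $\bar{f}$ is immediate from essential surjectivity, which in particular provides, for each $b\in B_0$, elements $a\in A_0$ and $\beta\colon f_0(a)\to b$ in $B_1$, so $[b]=\bar{f}([a])$. For injectivity, suppose $[f_0(a_1)]=[f_0(a_2)]$ and pick an arrow $\beta\colon f_0(a_1)\to f_0(a_2)$ in $B_1$; the full faithfulness pullback $A_1\simeq A_0^2\times_{B_0^2}B_1$ lifts $\beta$ uniquely to an arrow $a_1\to a_2$ in $A_1$, so $[a_1]=[a_2]$.

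For continuity of $\bar{f}^{-1}$, by the universal property of $q_B$ it suffices to verify that $\bar{f}^{-1}\circ q_B\colon B_0\to A_0/A_1$ is continuous. Cover $B_0$ by opens $W_k$ admitting local sections $\sigma_k=(a_k,\beta_k)\colon W_k\to A_0\times_{B_0}B_1$ of $t\circ\pr_2$. Then $\beta_k(w)$ is an arrow $f_0(a_k(w))\to w$ in $B_1$, so on $W_k$ one has $[w]=\bar{f}([a_k(w)])$, i.e.\ $\bar{f}^{-1}\circ q_B\big|_{W_k} = q_A\circ a_k$, which is continuous; locality of continuity extends this to all of $B_0$. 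Compatibility with the maps to $Y$ is automatic: the structure maps $A_0\to Y$ and $B_0\to Y$ are already constant on orbits (since $A$ and $B$ lie over the discrete groupoid on $Y$), and the triangle formed by $A_0\to B_0\to Y$ and $A_0\to Y$ descends directly.

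The main obstacle is the continuity of $\bar{f}^{-1}$: bijectivity is a formal consequence of the pullback condition and pointwise essential surjectivity, but producing a continuous inverse genuinely requires the local-section formulation of essential surjectivity, together with the universal property of $q_B$ used to assemble the local continuous lifts $q_A\circ a_k$ into a global one. Once this is in hand, the remainder of the argument is routine bookkeeping with the quotient topology.
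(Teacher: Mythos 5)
Your argument is correct, and it uses each hypothesis of the definition of weak equivalence in exactly the way one would expect: full faithfulness for injectivity, the local sections of $t\circ\mathrm{pr}_2$ for surjectivity and for continuity of the inverse via the universal property of the quotient topology, and functoriality over $\disc(Y)$ for compatibility with the maps to $Y$. The paper states this lemma without proof, so there is no argument to compare against, but yours is the natural one and I see no gaps.
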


We will define a locally ex-space $p$ to be \emph{connected} if for any two coherent local splittings there is a natural transformation between functors out of a common refinement of the two \v Cech groupoids involved. 
This implies that the fibres of $p$ are path connected, but the reverse implication does not necessarily hold without further assumptions on $p$.

\begin{corollary}

	For a connected sl1c locally ex-space the space of orbits of $\widehat{Y}^{(1),s}$ is, up to a isomorphism over $Y$, independent of the choice of coherent local splitting $s$.

\end{corollary}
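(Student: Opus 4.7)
The plan is to combine the three preceding results (the weak equivalence $\widehat{Y}^{(1),s\circ i} \to \widehat{Y}^{(1),s}$ for a refinement $i\colon V\to U$, the analogous statement for a natural isomorphism $s_1 \Rightarrow s_2$, and the induced isomorphism on orbit spaces from any weak equivalence) into a zig-zag that connects any two choices of coherent local splitting.

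Fix two coherent local splittings of $p$, giving functors $s_j\colon \check{C}(U_j) \to \pionevert(p)$ for $j=1,2$. By the definition of connectedness, there exists a common refinement $W \to X$ of $U_1$ and $U_2$ together with a natural transformation $\eta\colon s_1'\Rightarrow s_2'$ between the restricted functors $s_j' := s_j\big|_{\check{C}(W)}\colon \check{C}(W) \to \pionevert(p)$. Since $\pionevert(p)$ is a groupoid, $\eta$ is automatically a natural isomorphism. The refinement maps $W\to U_j$ are open covers, so $\check{C}(W) \to \check{C}(U_j)$ are weak equivalences with open cover object components, and hence by the first lemma above, the induced functors $\widehat{Y}^{(1),s_j'} \to \widehat{Y}^{(1),s_j}$ are weak equivalences over $Y$. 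The natural isomorphism $\eta$ supplies, by the observation following the first lemma, a further weak equivalence $\widehat{Y}^{(1),s_1'} \to \widehat{Y}^{(1),s_2'}$ over $Y$.

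Applying the second lemma above (which turns weak equivalences into isomorphisms of orbit spaces over $Y$) to each edge of the zig-zag
\[
	\widehat{Y}^{(1),s_1} \leftarrow \widehat{Y}^{(1),s_1'} \to \widehat{Y}^{(1),s_2'} \leftarrow \widehat{Y}^{(1),s_2}
\]
produces a chain of isomorphisms of orbit spaces over $Y$, whose composite is the desired isomorphism
\[
	\widehat{Y}^{(1),s_1}_0/\widehat{Y}^{(1),s_1}_1 \xrightarrow{\simeq} \widehat{Y}^{(1),s_2}_0/\widehat{Y}^{(1),s_2}_1
\]
over $Y$.

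The main obstacle is really just bookkeeping: ensuring that at each stage the constructions are compatible with the maps down to $Y$, so that the resulting isomorphism is genuinely over $Y$, and that the refinement step (passing to $\check{C}(W)$) is allowed even when the two covers $U_1, U_2$ are unrelated; both are guaranteed by the hypotheses (any open cover admits a common refinement, e.g.\ the pullback cover, and the maps to $Y$ are built from the projection $\pionevert(p)^{\mathbf{2}} \xrightarrow{T} \pionevert(p) \to Y$, which is unchanged under the constructions above).
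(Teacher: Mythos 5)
Your proof is correct and follows exactly the route the paper intends: the corollary is stated without proof precisely because it is the immediate combination of the two preceding lemmas (refinement and orbit-space isomorphism), the remark about natural isomorphisms inducing weak equivalences, and the definition of connectedness supplying the transformation over a common refinement. Your zig-zag assembly is the same argument, just written out.
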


This corollary is analogous to the result that the universal covering space of a connected space is independent of the choice of basepoint used to construct it.

\begin{definition}

	A \emph{fibrewise universal covering space} $Y^{(1),s}$ of a connected sl1c locally ex-space with coherent local splittings is the space of orbits of any of the groupoids $\widehat{Y}^{(1),s}$ for any choice $s$ of coherent local splitting.

\end{definition}

The usual abuse of notation will apply, and we shall denote any fibrewise universal covering space simply by $Y^{(1)}$.

\begin{proposition}
	
	For any $x\in X$, the restriction of $Y^{(1)}$ to a fibre $Y_x = p^{-1}(x) \subset Y$ is a universal covering space $\widetilde{Y_x}$

\end{proposition}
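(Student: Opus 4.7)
The plan is to compute the restriction of $\widehat{Y}^{(1),s}$ to the fibre $Y_x$ explicitly and identify its orbit space with the usual universal covering $\widetilde{Y_x}$ constructed from $\Pi_1(Y_x)$ at a suitable basepoint. First I would verify that the fibres of an sl1c map are themselves semilocally 1-connected, by directly applying the sl1c lifting condition to squares $\partial I^2 \to Y_x$ together with the constant extension to $I^2 \to \{x\} \subset U$; the resulting lift to $p^{-1}(U)$ is forced to land in $Y_x$. This ensures the ordinary topological fundamental groupoid $\Pi_1(Y_x)$ is defined, and by the preceding proposition on pullbacks (applied to the inclusion $\{x\} \hookrightarrow X$), there is a natural isomorphism of topological groupoids between the restriction $\pionevert(p)|_{Y_x}$ and $\Pi_1(Y_x)$.

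Next I would pull back the defining diagram of $\widehat{Y}^{(1),s}$ over the inclusion $Y_x \hookrightarrow Y$. The \v Cech groupoid $\check{C}(U)$ restricts to the codiscrete groupoid on the set $I_x = \{i : x \in U_i\}$. Using the identification above, the objects of the restricted groupoid $\widehat{Y}^{(1),s}|_{Y_x}$ become pairs $(i, \gamma)$ with $i \in I_x$ and $\gamma$ a homotopy class of paths in $Y_x$ starting at $s_i(x)$; morphisms $(i, \gamma_1) \to (j, \gamma_2)$ exist uniquely when the homotopy class of $h_{ij}(x,-)$ composes $\gamma_1$ into $\gamma_2$. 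Fix any $i_0 \in I_x$. A direct check shows that each orbit has a unique representative of the form $(i_0, \tilde\gamma)$: within the $i_0$-component there are no nontrivial morphisms (because $h_{ii}$ is constant and lifts are unique), while from any other $i$ one moves to $i_0$ via $h_{i_0 i}$. Thus as a set the orbit space is in natural bijection with homotopy classes of paths in $Y_x$ starting at $s_{i_0}(x)$, which is $\widetilde{Y_x}$.

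It remains to match topologies, and this is where I expect the main technical obstacle. The orbit space carries the quotient topology from $\widehat{Y}^{(1),s}_0|_{Y_x}$, which sits inside $I_x \times \pionevert(p)_1|_{Y_x}$ via the subspace topology. The composite $\widetilde{Y_x} \hookrightarrow \{i_0\} \times \pionevert(p)_1|_{Y_x} \hookrightarrow \widehat{Y}^{(1),s}_0|_{Y_x} \to Y^{(1)}_x$ is a continuous bijection, and on the other hand the map sending $(i, \gamma)$ to the concatenation $h_{i_0 i}(x,-)\cdot \gamma$ is continuous in $\gamma$ for each $i$ (since concatenation with a fixed path is continuous in the compact-open topology) and continuous in $(i,x)$ on each sheet of the discrete fibre, so it provides a continuous inverse after passage to orbits. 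Together with the identification from the pullback proposition of the topology on $\pionevert(p)_1|_{Y_x}$ with that of $\Pi_1(Y_x)_1$ (which in turn is the standard Brown--Danesh-Naruie topology), this yields a homeomorphism $Y^{(1)}_x \cong \widetilde{Y_x}$ over $Y_x$. Finally, I would note that the argument is independent of the chosen $i_0$ (different choices give covering spaces with different basepoints, canonically isomorphic over $Y_x$), matching the earlier corollary on independence of the splitting.
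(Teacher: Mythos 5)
Your proof is correct, but it takes a genuinely different route from the paper's. The paper's argument is two sentences: choose a coherent local splitting in which $x$ lies in only a \emph{single} open set of the cover; then $\widehat{Y}^{(1),s}\times_Y Y_x$ has no nontrivial arrows and is literally the source fibre of $\Pi_1(Y_x)$ at $s(x)$, which is $\widetilde{Y_x}$; independence of the splitting (the earlier corollary) transfers this to any other choice. You instead work with an \emph{arbitrary} splitting, compute the restricted equivalence relation explicitly, and show each orbit meets the sheet indexed by a fixed $i_0$ in exactly one point, using that $h_{ii}$ is constant, that arrows are unique, and the up-to-homotopy cocycle condition to see that $(i,\gamma)\mapsto [h_{i_0i}(x,-)]\cdot\gamma$ descends to a continuous inverse on orbit spaces. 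What your approach buys: it does not require constructing a splitting in which $x$ lies in only one open set (which implicitly needs $\{x\}$ to be closed in the relevant open sets so the shrunken cover is still open), it does not lean on the independence corollary, and it makes the topological identification explicit rather than implicit. What the paper's approach buys is brevity, at the cost of those implicit appeals. One small remark: your opening verification that $Y_x$ is semilocally 1-connected is already a consequence of Lemma \ref{lemma:sl1c_local_on_base} (sl1c is preserved by arbitrary pullback, applied to $\{x\}\hookrightarrow X$), so that step is redundant, though harmless.
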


\begin{proof}

	One can take a coherent local splitting $s$ such that the point $x$ is only contained in a single open set in the cover of $X$.
	Then $\widehat{Y}^{(1),s}\times_Y Y_x$ is isomorphic to the source fibre of $\Pi_1(Y_x)$ at $s(x)\in Y$, but this is a universal covering space of $Y_x$.
\end{proof}

Finally, let us consider the case when the locally ex-space is not connected.
When one wants to define a universal covering space of a non-connected space $X$, the construction goes through as long as instead of taking a basepoint, the input is instead a section $\pi_0(X) \to X$ of the natural projection to path components.
In the fibrewise case $\pi_0$ is replaced by the bundle $\pi_0(\pionevert(p)) =:\pi_0^{vert}(p) \to X$ of vertical path components of $p\colon Y\to X$. 
There is a map $Y \to \pi_0^{vert}(p)$ of spaces over $X$, so $Y$ becomes a space over $\pi_0^{vert}(p)$.
The coherent local splitting of $p$, analogous to a single basepoint, now gets replaced by a coherent local splitting of $Y \to \pi_0^{vert}(p)$. 
The theory then goes through as before.
Note that in the case that $p$ is a connected locally ex-space, $\pi_0^{vert}(p)=X$, so we recover the above treatment for connected $p$.

\subsection{2-bundles}

A topological 2-bundle (see e.g.\ \cite{Bartels}) is a categorified (principal or fibre) bundle, where the fibres are replaced by topological groupoids. 
One still asks for local triviality, but in a sense that is a little subtle. 
Using the terminology of anafunctors from \cite{Bartels,Roberts_12}, one can say that a 2-bundle is locally equivalent to a product, where the equivalence is an anafunctor rather than a functor.
We shall give a simpler description that is good enough for present purposes.

\begin{definition}

	A \emph{2-bundle} over a space $X$ is a topological groupoid $Z$, equipped with a functor to $X$ (considered as a trivial topological groupoid), that is locally over $X$ weakly equivalent to a product, i.e.\ a groupoid of the form $F\times U$, for some topological groupoid $F$.

\end{definition}

It should be clear that a fibre homotopy equivalence between $p_1\colon Y_1 \to X$ and $p_2\colon Y_2 \to X$ gives rise to an internal equivalence 
\[
    \pionevert(p_1) \leftrightarrows \pionevert(p_2)
\]
of topological groupoids over $X$, in the sense that the two back and forth composites are (continuously) naturally isomorphic to the identity functors. 
In particular, if $Y_2 = X\times F$ and $p_2$ is projection onto $X$, then we have an internal equivalence
\[
    \pionevert(p_1) \leftrightarrows X\times \Pi_1(F)
\]
Note that an internal equivalence is \emph{a fortiori} a weak equivalence.

Thus we have the following result.

\begin{proposition}

	Assume the space $B$ is locally 1-connected. Given a locally homotopy trivial fibration $p\colon E\to B$ with semilocally 1-connected typical fibre $F$, $\pionevert(p)$ is a 2-bundle with typical fibre $\Pi_1(F)$.

\end{proposition}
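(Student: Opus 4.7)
The plan is to verify the definition of a 2-bundle directly, by exhibiting local weak equivalences on the trivialising cover $\{U_i \to B\}$ provided by the local homotopy triviality of $p$.

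First I would check that $p$ is sl1c, so that $\pionevert(p)$ is a well-defined topological groupoid. By Lemma \ref{lemma:sl1c_local_on_base} it suffices to verify this on the cover, so consider $p|_{U_i} \colon E|_{U_i} \to U_i$. The projection $U_i \times F \to U_i$ is sl1c (since $F$ is semilocally 1-connected and $U_i \subset B$ is locally 1-connected), and the sl1c lifting condition for squares $\partial I^2 \to I^2$, $N \to U$ is preserved under fibre homotopy equivalence: given a lift for the product model, one composes with the maps and homotopies witnessing the fibre homotopy equivalence, absorbing the resulting discrepancy on the boundary by using semilocal simple connectedness of a smaller neighbourhood of $U$ in the base. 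Hence $p|_{U_i}$, and therefore $p$, is sl1c.

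Next, by the pullback proposition at the end of Section 3, there is a natural isomorphism $\pionevert(p)|_{U_i} \cong \pionevert(p|_{U_i})$ of topological groupoids over $U_i$. The fibre homotopy equivalence between $E|_{U_i}$ and $U_i \times F$ over $U_i$, by the remarks immediately preceding the proposition, yields an internal equivalence
\[
    \pionevert(p|_{U_i}) \leftrightarrows \pionevert(U_i \times F \to U_i),
\]
and Example \ref{eg:pionevert_of_projection} identifies the right-hand groupoid with $U_i \times \Pi_1(F)$ over $U_i$. Composing these (using that internal equivalences are weak equivalences) produces the required local weak equivalence between $\pionevert(p)|_{U_i}$ and $U_i \times \Pi_1(F)$, so $\pionevert(p)$ is a 2-bundle with typical fibre $\Pi_1(F)$.

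The main obstacle is the first step: the verification that $p$ is sl1c under only the weakened hypothesis of local homotopy triviality rather than strict local triviality. Once this is settled, the rest of the argument is purely a matter of chaining together results already in place, namely the pullback compatibility of $\pionevert$, the effect of fibre homotopy equivalences on $\pionevert$, Example \ref{eg:pionevert_of_projection}, and the observation that an internal equivalence is in particular a weak equivalence.
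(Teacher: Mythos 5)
Your argument for the main claim is essentially the one the paper intends: restrict $\pionevert(p)$ to a trivialising open set $U_i$ using the pullback proposition, convert the fibre homotopy equivalence $E|_{U_i}\simeq U_i\times F$ into an internal equivalence $\pionevert(p|_{U_i})\leftrightarrows U_i\times\Pi_1(F)$ via Example \ref{eg:pionevert_of_projection} and the homotopy-to-natural-transformation construction of Section 3, and observe that an internal equivalence is \emph{a fortiori} a weak equivalence. The paper gives exactly this chain in the paragraphs immediately preceding the proposition and offers no further proof.

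Where you go beyond the paper is in flagging and sketching the verification that $p$ is sl1c, which the paper leaves implicit even though it is needed both for $\pionevert(p)$ to be a topological groupoid and for the functoriality used in building the internal equivalence; this is a worthwhile addition. Your sketch of that step is, however, slightly misdirected: after transporting a filler from the product model $U_i\times F$ back through $g\colon U_i\times F\to E|_{U_i}$, the two discrepancies to absorb are (i) the boundary is $g\circ f\circ\iota$ rather than $\iota$, which is corrected by a collar using the \emph{vertical} homotopy $g\circ f\simeq\id$, and (ii) the resulting disc no longer projects to the prescribed map $I^2\to U_i$, which is corrected using the homotopy lifting property of the fibration $E|_{U_i}\to U_i$ rel $\partial I^2$ (this is where the fibration hypothesis earns its keep), not by shrinking or appealing to connectivity of a neighbourhood in the base. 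One also needs to choose $N$ so that $\pr_F(f(N))$ lands in a subset of $F$ witnessing semilocal 1-connectedness of $F$. With those corrections your first step goes through, and the rest of your argument matches the paper.
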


In fact one can say a little more, in that since $\Pi_1(F)$ is weakly equivalent to a topologically discrete groupoid, $\pionevert(p)$ is a \emph{2-covering space} in the sense of \cite{Roberts_phd}. 
It only remains to give a ready supply of examples of locally homotopy trivial fibrations in order to give plenty of 2-bundles. 
But any Hurewicz fibration over a space with a cover by open sets with null-homotopic inclusions is a locally homotopy trivial fibration \cite{Dold_63}. 
In fact any Dold fibration (a weaker notion) over such a space is locally homotopy trivial.

Given a locally relatively contractible space $X$, the path fibration $\ev_1\colon PX \to X$ is a locally homotopy trivial fibration, and the 2-bundle $\pionevert(\ev_1)$ in that case is the canonical 2-covering space from \cite{Roberts_phd}.

In a slightly different vein, recall that given a (semilocally 1-connected) topological group $G$, the fundamental groupoid is a strict 2-group (in the sense of a categorical group, not a $p$-group for $p=2$). 
A strict 2-group is the same thing as a crossed module, and in this case is the crossed module $\widetilde{G_0} \to G$ where $G_0$ is the connected component of the identity.
Given a bundle $\pi\colon G \to X$ of topological groups over a locally 1-connected space (with projection map sl1c), we can define the bundle of strict 2-groups $\pionevert(\pi) \to X$.
This then gives rise to a bundle of crossed modules over $X$, where we take the identity section of $\pi$ to define the fibrewise universal covering space of the fibrewise identity component of $G$.

\subsection{A topological fundamental bigroupoid}

Now we shall restrict attention to the map $E:=(\ev_0,\ev_1)\colon X^I \to X\times X$, and so we need to see when this is sl1c. 
In order to study the local properties of the mapping space $X^I$, we shall give a specific open neighbourhood basis for it.

Let $\gamma \in X^I$, and let $\mathfrak{p} = \{t_1,\ldots,t_{2n}\mid t_i \in I,\ t_1 < \ldots < t_{2n}\}$ be a partition of the unit interval. 
Also, let $W=\{W_i\}_{i=1}^{2n+1}$ be a collection of basic open neighbourhoods of $X$ such that

\begin{itemize}
	\item
		$\gamma([t_{i-1},t_i]) \subset W_i$ (by convention, let us take $t_0 = 0,\ t_{2n+1} = 1$), and

	\item
		$W_{2i} \subset W_{2i-1}\cap W_{2i+1}$ for $i=1,\ldots,n$.

\end{itemize}
Then define the set $N(\mathfrak{p},W)\subset X^I$ to consist of those paths $\eta$ such that $\eta([t_{i-1},t_i]) \subset W_i$.

\begin{lemma}
	\label{lemma:basic_opens_path_space}

	The sets $N(\mathfrak{p},W)$ form a collection of basic open neighbourhoods for the compact-open topology on $X^I$.
	
\end{lemma}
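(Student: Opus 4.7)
The plan is to verify both axioms of a neighbourhood basis. First, each $N(\mathfrak{p},W)$ is open in the compact-open topology: writing
\[
N(\mathfrak{p},W) = \bigcap_{i=1}^{2n+1} \{\eta \in X^I : \eta([t_{i-1},t_i]) \subset W_i\}
\]
exhibits it as a finite intersection of subbasic compact-open sets. The substantive content is that given any $\gamma \in X^I$ and any compact-open basic open neighbourhood $V$ of $\gamma$, some $N(\mathfrak{p},W)$ contains $\gamma$ and is contained in $V$.

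Write $V = \bigcap_{j=1}^{m} \{\eta : \eta(K_j) \subset U_j\}$. Since each $K_j$ is compact inside the open set $\gamma^{-1}(U_j)$, I would cover $K_j$ by finitely many closed subintervals contained in $\gamma^{-1}(U_j)$ and replace the constraint $\eta(K_j)\subset U_j$ by the stronger constraints that each such subinterval maps into $U_j$; this passes to a smaller basic open $V' \subset V$ still containing $\gamma$, so I may assume each $K_j = [a_j,b_j]$ is itself a closed subinterval. Then I would choose a partition $\mathfrak{s}\colon 0 = s_0 < s_1 < \cdots < s_M = 1$ that (i) contains every endpoint $a_j,b_j$, and (ii) is fine enough that for each $k$ there is a single basic open $\Omega_k$ of $X$ containing $\gamma([s_{k-1},s_k])$ with
\[
\Omega_k \subset \bigcap \{U_j : [s_{k-1},s_k] \subset K_j\}.
\]
Such $\mathfrak{s}$ exists because $\gamma$ is continuous, $I$ is compact, and refining the partition only strengthens the hypothesis of (ii). Property (i) has the useful consequence that each $K_j$ is an exact union of consecutive pieces of $\mathfrak{s}$.

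To produce the staircase structure, I would refine $\mathfrak{s}$ to $\mathfrak{p}$ by inserting, around each interior $s_k$, a transition pair $s_k - \epsilon_k < s_k + \epsilon_k$; I would choose a basic open $W_{2k}$ with $\gamma(s_k) \in W_{2k} \subset \Omega_k \cap \Omega_{k+1}$ and $\epsilon_k > 0$ small enough that both $\gamma([s_k - \epsilon_k, s_k + \epsilon_k]) \subset W_{2k}$ and $[s_k - \epsilon_k, s_k + \epsilon_k] \subset (s_{k-1}, s_{k+1})$. Setting $W_{2k-1} := \Omega_k$ for the odd pieces, I obtain the required staircase $W_{2k} \subset W_{2k-1} \cap W_{2k+1}$ and $\gamma \in N(\mathfrak{p},W)$ by construction. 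For $N(\mathfrak{p},W) \subset V'$, every piece of $\mathfrak{p}$ meeting $K_j$ has its corresponding $W_i$ contained in $U_j$: an odd piece $2k-1$ meets $K_j$ only when $[s_{k-1},s_k] \subset K_j$, so $W_{2k-1} = \Omega_k \subset U_j$; an even piece $2k$ meets $K_j$ only when $s_k \in K_j$ (given the size bound on $\epsilon_k$), in which case $[s_{k-1},s_k]$ or $[s_k, s_{k+1}]$ lies in $K_j$, forcing $\Omega_k \subset U_j$ or $\Omega_{k+1} \subset U_j$ and hence $W_{2k} \subset \Omega_k \cap \Omega_{k+1} \subset U_j$.

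The main obstacle is reconciling the staircase condition $W_{2k} \subset W_{2k-1} \cap W_{2k+1}$ with the containments $W_i \subset U_j$ imposed by $V$. The decoupling trick is (i): once the initial partition contains every $a_j$ and $b_j$, each $K_j$ becomes a union of $\mathfrak{s}$-pieces, so odd pieces inherit the right containment $\Omega_k \subset U_j$ and even pieces automatically satisfy the $U_j$-constraints by being subsets of $\Omega_k \cap \Omega_{k+1}$, at least one factor of which is always in $U_j$.
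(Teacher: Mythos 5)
The paper gives no proof of this lemma, so there is nothing to compare against; judged on its own terms, your argument is correct and complete. You address both halves: each $N(\mathfrak{p},W)$ is open as a finite intersection of subbasic compact-open sets, and every basic compact-open neighbourhood of $\gamma$ contains some $N(\mathfrak{p},W)\ni\gamma$. The genuinely delicate point --- making the staircase condition $W_{2k}\subset W_{2k-1}\cap W_{2k+1}$ compatible with the containments $W_i\subset U_j$ --- is handled by the right device: first replace each compact $K_j$ by finitely many closed subintervals of $\gamma^{-1}(U_j)$ (these should be taken nondegenerate, which is always possible since $\gamma^{-1}(U_j)$ is open; a degenerate $K_j=\{c\}$ would defeat your claim that an odd piece meeting $K_j$ forces $[s_{k-1},s_k]\subset K_j$), then force all endpoints $a_j,b_j$ into the coarse partition $\mathfrak{s}$, so that each $K_j$ is an exact union of $\mathfrak{s}$-pieces; the even transition pieces then inherit the $U_j$-constraints for free from sitting inside $\Omega_k\cap\Omega_{k+1}$. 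The one step you only gesture at, the existence of $\mathfrak{s}$ satisfying (ii), does hold for the reason you indicate: once (i) is imposed, passing to a subpiece $P$ of a coarse piece $Q$ can only shrink the index set $\{j : P\subset K_j\}$, so the target intersection only grows, and a Lebesgue-number argument within each coarse piece produces the $\Omega_k$. I would spell that out, but the proof is sound.
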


Recall that the space $X$ is \emph{semilocally 2-connected} if it has an open neighbourhood basis consisting of 1-connected sets $U$, the inclusion of which induces the zero map $\pi_2(U) \to \pi_2(X)$.

\begin{lemma}

	The space $X$ is semilocally 2-connected if and only if $E$ is sl1c with locally 1-connected codomain.

\end{lemma}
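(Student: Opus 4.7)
The plan is to prove both directions of the stated equivalence, with the more substantial content in the forward implication.

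For the forward direction, suppose $X$ is semilocally 2-connected. The hypothesis includes a basis of 1-connected opens, so $X$ is locally 1-connected and hence $X \times X$ is too. To verify that $E$ is sl1c at a given $\gamma \in X^I$, I would use Lemma~\ref{lemma:basic_opens_path_space} to choose a partition $\mathfrak{p} = \{t_1 < \cdots < t_{2n}\}$ and windows $W_1, \ldots, W_{2n+1}$ such that $\gamma([t_{i-1}, t_i]) \subset W_i$, each $W_i$ is semilocally 2-connected (so in particular 1-connected, with $\pi_2(W_i) \to \pi_2(X)$ the zero map), and the nesting $W_{2i} \subset W_{2i-1} \cap W_{2i+1}$ holds. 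Set $N = N(\mathfrak{p}, W)$ and $U = W_1 \times W_{2n+1}$.

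Unfolding the adjunction, a square to be lifted becomes a map $f : \partial(I^2 \times I) \to X$ sending $\partial I^2 \times [t_{i-1}, t_i]$ into $W_i$, $I^2 \times \{0\}$ into $W_1$ and $I^2 \times \{1\}$ into $W_{2n+1}$; one seeks an extension $I^2 \times I \to X$. The idea is to build the extension slab by slab. For each interior $j$ with $1 \le j \le 2n$, the restriction $f|_{\partial I^2 \times \{t_j\}}$ lies in $W_j \cap W_{j+1}$, which the nesting forces to equal the smaller, even-indexed window; using 1-connectedness of that window, this extends to a 2-disk $\phi_j : I^2 \to W_{\mathrm{even}}$. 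Taking $\phi_0 = f|_{I^2 \times \{0\}}$ and $\phi_{2n+1} = f|_{I^2 \times \{1\}}$, the boundary of each slab $I^2 \times [t_{j-1}, t_j]$ then carries a 2-sphere whose image lies in $W_j$ (since every even window sits inside its neighbouring odd windows), and semilocal 2-connectedness of $W_j$ produces a 3-disk filling in $X$. Concatenating these fillings gives the desired extension.

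For the backward direction, assume $E$ is sl1c with $X \times X$ locally 1-connected; in particular $X$ is locally 1-connected. To establish the $\pi_2$-nullity condition at $x \in X$, apply the sl1c property at the constant path $c_x$ to obtain $N = N(\mathfrak{p}, W)$ and $U = U_0 \times U_1$, and set $V = \bigcap_i W_i \cap U_0 \cap U_1$. Given any $f : S^2 \to V$, realise $S^2$ as $\partial(I^2 \times I)$ and decompose $f$ into a cylindrical part $\alpha : \partial I^2 \times I \to V$ and caps $\beta : I^2 \times \{0, 1\} \to V$. Then $\alpha$ defines a map $\partial I^2 \to N$ (since $V \subset W_i$ for every $i$) and $\beta$ provides the required extension $I^2 \to U$ (since $V \subset U_0, U_1$). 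The sl1c lift yields a map $I^2 \times I \to X$ bounding $f$, so $\pi_2(V) \to \pi_2(X)$ is zero. A refinement $V' \subset V$ chosen 1-connected via local 1-connectedness of $X$ completes the desired open basis.

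The main obstacle is the slab-by-slab filling in the forward direction: the specific nesting $W_{2i} \subset W_{2i-1} \cap W_{2i+1}$ is exactly the combinatorial device that makes each intermediate disk $\phi_j$, and hence each slab-boundary sphere, land in a single semilocally 2-connected window, so that the individual fillings exist and can be concatenated without compatibility issues. The remainder is routine bookkeeping through the path-mapping-space adjunction.
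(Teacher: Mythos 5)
Your proof is correct and takes essentially the same route as the paper's: the forward direction fills $I^2\times I$ slab-by-slab over the partition, using $1$-connectedness of the even windows for the interface loops and the $\pi_2$-nullity of the $W_i$ for the resulting slab spheres, while the backward direction reads a small sphere $\partial(I^2\times I)\to V$ as a vertical loop in $X^I$ over a square in $U\subset X^2$ and applies the sl1c lift to fill it in $X$. The only difference is organisational --- you choose the interface disks first and then fill the slab spheres, which if anything makes the concatenation cleaner than the paper's phrasing of filling cylinders and then mismatch spheres.
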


\begin{proof}

	Firstly, note that $X^2$ has an open neighbourhood basis consisting of 1-connected sets if and only if $X$ has an open neigbourhood basis of 1-connected sets. 
	Assume $(ev_0,ev_1)$ is sl1c and let $f\colon \partial I^3 \to U$ be a map to a basic open nhd $U \subset X$. 
	We can view this as a loop in $X^I$ with projection down to $X^2$ a constant loop at $(f(N),f(S))$, where $N$ and $S$ are the north and south poles of $S^2\simeq \partial I^3$. 
	We have the obvious  filler for this loop, and we can lift this to a filler for the loop in $X^I$ in $(U^2)\times_{X^2} X^I$, giving a filler for the map from the sphere in $X$. 
	Thus $X$ is semilocally 2-connected.

	Now if $X$ is semilocally 2-connected, a basic open neighbourhood of $\gamma \in X^I$ is as in lemma \ref{lemma:basic_opens_path_space}, where the open sets $W_i$ are 1-connected. 
	So we take as basic open neighbourhood of $(\gamma(0),\gamma(1))$ the set $W_0\times W_{2n+1}$.  
	
	Then given the lifting problem as in the definition of an sl1c map, a lift is given by choosing inductively fillers for the cylinders $I\times \partial I^2 \to W_i$ and the spheres $I^2 \cup I^2 \to W_i \cap W_{i+1}$. 
	Since $W_i \cap W_{i+1}$ is contained in either of $W_i$ or $W_{i+1}$ is is possible to fill after inclusion into $X$. 
	Thus $(ev_0,ev_1)$ is sl1c, and we are done.
\end{proof}

Bigroupoids are one model for the homotopy 2-types of spaces -- though there are other, strict models, which we shall not touch on here. 
The definition of topological bigroupoid presented here is in the style of a weak enrichment by topological groupoids, but now the hom-groupoids needs to vary continuously over the space of objects.
This is a little more concise than the definitions in \cite{Danny_phd,HKK_01}, which write out all the components explicitly.

\begin{definition}\label{defn:top_bigpd}

	A \emph{topological bigroupoid} $B$ is a topological groupoid $\underline{B}_1$ equipped with a functor $(S,T)\colon  \underline{B}_1 \to \disc(B_0\times B_0)$ for a space $B_0$ together with:

  \begin{itemize}

    \item[--]

      Functors

      \begin{align*}
      	C\colon & \underline{B}_1 \times_{S,\disc(B_0),T}  \underline{B}_1  \to  \underline{B}_1\\
      	I\colon &\disc(B_0) \to  \underline{B}_1
      \end{align*}

      over $\disc(B_0\times B_0)$ and a functor 
      \[
        \overline{(-)}\colon  \underline{B}_1 \to  \underline{B}_1
      \]
      covering the swap map $\disc(B_0\times B_0) \to \disc(B_0\times B_0)$.

    \item[--]

      Continuous maps

      \[
          \begin{array}{l}
            a\colon\Obj( \underline{B}_1) \times_{S,B_0,T} \Obj( \underline{B}_1)\times_{S,B_0,T} 
            \Obj( \underline{B}_1) \to \Mor( \underline{B}_1)   \\
            r\colon\Obj( \underline{B}_1) \to \Mor( \underline{B}_1)\\
            l\colon\Obj( \underline{B}_1) \to \Mor( \underline{B}_1)\\
            e\colon\Obj( \underline{B}_1) \to \Mor( \underline{B}_1)\\
            i\colon\Obj( \underline{B}_1) \to \Mor( \underline{B}_1)
          \end{array}
      \]

      which are the component maps of natural isomorphisms
      \[
      	\xymatrix{
      		 \underline{B}_1 \times_{S,\disc(B_0),T}  \underline{B}_1\times_{S,\disc(B_0),T}  
      		 \underline{B}_1
      		\ar[rr]^(.65){\id\times C} \ar[dd]_{C\times \id} 
      		&&
      		 \underline{B}_1 \times_{S,\disc(B_0),T}  \underline{B}_1
      		\ar[dd]_{\ }="t"^C\\
      		\\
      		 \underline{B}_1 \times_{S,\disc(B_0),T}  \underline{B}_1
      		\ar[rr]^(.8){\ }="s"_C
      		&&
      		 \underline{B}_1
      		\ar@{=>}"s";"t"^a
      	}
      \]\\
      \[
      	\xymatrix@C-1pc{
      		 \underline{B}_1 \times_{S,\disc(B_0)} \disc(B_0)
      		\ar[rr]^{\id\times I} \ar[rrdd]^{\ }="t1"_\simeq && 
      		 \underline{B}_1 \times_{S,\disc(B_0),T}  \underline{B}_1
      		\ar[dd]^(0.46)C="s2"_(0.46){\ }="s1" && \disc(B_0) \times_{\disc(B_0),T}  \underline{B}_1
      		\ar[ll]_{I\times\id} \ar[ddll]_{\ }="t2"^\simeq\\
      		\\
      		&&  \underline{B}_1
      		\ar@{=>}"s1";"t1"_r
      		\ar@{=>}"s2";"t2"^l
      	}	
      \]\\
      \[
      	\xymatrix{
      		 \underline{B}_1 \ar[rr]^{(\overline{(-)},\id)} \ar[dd]_S && 
      		 \underline{B}_1 \times_{S,\disc(B_0),T}  \underline{B}_1
      		\ar[dd]_{\ }="s1"^C="t2" &&
      		 \underline{B}_1 \ar[ll]_{(\id,\overline{(-)})} \ar[dd]^T\\
      		\\
      		\disc(B_0)\ar[rr]^{\ }="t1"_I &&  \underline{B}_1 && \disc(B_0)\ar[ll]_{\ }="s2"^I
      		\ar@{=>}"s1";"t1"_e
      		\ar@{=>}"s2";"t2"_i
      	}
      \]

	\end{itemize}

	The following diagrams are required to commute:
	\[
	  	\xymatrix@C-2pc{
	  		&& \ar[ddll]_{a_{(k\circ h)gf}} ((k\circ h)\circ g)\circ f \ar[rr]^{a_{khg}\circ \id_f} && (k\circ(h\circ g))\circ f \ar[ddrr]^{a_{k(h\circ g)f}}\\
	  		\\
	  		(k\circ h)\circ(g\circ f) \ar[ddrrr]_{a_{kh(g\circ f)}} &&& &&& k\circ((h\circ g)\circ f) \ar[ddlll]^{\id_k\circ a_{hgf}}\\
	  		\\
	  		&&&k\circ(h\circ(g\circ f))
	  	}
	\]
  
	\[
	  	\xymatrix{
	  		(g\circ I)\circ f \ar[rr]^{a_{gIf}} \ar[dr]_{r_g \id_f}&& g\circ(I\circ f) \ar[dl]^{\id_g l_f}\\
	  		&g\circ f
	  	}
	\]

	\[
	  	\xymatrix{
	  		I\circ f \ar[rr]^(.35){i_f\circ \id_f} \ar[dd]_{l_f} && (f\circ\overline{f})\circ f \ar[rr]^{a_{f\overline{f}f}} &&
	  		f\circ(\overline{f} \circ f) \ar[dd]^{\id_f\circ e_f}\\
	  		\\
	  		f \ar[rrrr]_{r_f^{-1}} &&&&f\circ I
	  	}
	\]

\end{definition}

\begin{proposition}

	Given a semilocally 2-connected space $X$, applying $\pionevert$ to $E\colon X^I \to X^2$ gives a topological bigroupoid $\Pi_2(X)$.

\end{proposition}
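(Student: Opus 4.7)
The plan is to observe that $E=(\ev_0,\ev_1)\colon X^I \to X\times X$ is sl1c with locally 1-connected codomain by the preceding lemma, so $\underline{B}_1 := \pionevert(E)$ is a well-defined topological groupoid; composing its object-of-objects functor with $E$ supplies the required functor $(S,T)\colon \underline{B}_1 \to \disc(X\times X)$, with $B_0 := X$. The remaining data will all be built from the standard $A_\infty$-style structure that path concatenation places on the free path space $X^I$.

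For the composition functor $C$, I would take the continuous concatenation map $X^I \times_X X^I \to X^I$ given by the usual piecewise formula, and extend to morphisms by horizontal pasting of representing vertical homotopies; the square-filling argument already used for continuity of multiplication in $\pionevert$ applies verbatim to show the result descends to vertical homotopy classes. The identity functor $I$ sends $x \in X$ to the constant path $c_x$, and $\overline{(-)}$ is path (and 2-path) reversal. All three are continuous and fibred correctly over $\disc(X\times X)$.

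For the natural isomorphism components $a, l, r, e, i$, each is given by an explicit continuously varying vertical reparameterization path in $X^I$ — for instance, $a(\gamma,\delta,\epsilon)$ is the standard affine reparameterization path from $(\gamma * \delta)*\epsilon$ to $\gamma*(\delta*\epsilon)$ — composed with the continuous map $[I,X^I]_{vert} \to \Mor(\underline{B}_1)$ supplied by the earlier lemma. Naturality against an arbitrary vertical 2-morphism is witnessed by an explicit 3-dimensional reparameterization of the 2-morphism, which upon projection yields the required equality in $\Mor(\underline{B}_1)$. The pentagon, triangle, and zig-zag coherence identities are equalities of 2-morphisms, so each reduces to producing a vertical homotopy between the two parameter-space composites; these are the classical Stasheff 3-cells that witness the $A_\infty$-structure on $X^I$, and their existence within a fibre of $E$ is exactly what the sl1c condition on $E$ guarantees.

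I expect the main obstacle to be bookkeeping rather than mathematical content: one must write out each reparameterization formula, verify continuity using the neighbourhood basis of Lemma~\ref{lemma:basic_opens_path_space}, and check that every 2-cell lies in the appropriate fibre over $\disc(X\times X)$. The key enabling observation is that passing from $X^I$ to $\pionevert(E)_1$ replaces up-to-homotopy identities by strict ones, so the bigroupoid axioms that would fail on the nose for path concatenation become genuine equalities once we quotient by vertical 2-homotopy.
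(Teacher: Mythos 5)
Your proposal follows essentially the same route as the paper: take $\underline{B}_1=\pionevert(E)$ with $B_0=X$, obtain $C$, $I$ and $\overline{(-)}$ from concatenation, constant paths and reversal, define $a,l,r,e,i$ by the standard reparameterization homotopies pushed into $\Mor(\pionevert(E))$, and verify the coherence axioms by exhibiting vertical homotopies between the competing composites. The one point to correct is your claim that the existence of the coherence three-cells ``is exactly what the sl1c condition on $E$ guarantees'': the sl1c condition only provides fillers for maps $\partial I^2\to I^2$ and is used for the \emph{topology} (continuity of multiplication), not for these cells; the pentagon, triangle and zig-zag identities hold because the relevant two-cells are all affine reparameterization homotopies of one and the same underlying path, hence any two of them are vertically homotopic by an explicit affine three-cell staying in the image of that path --- no local connectivity hypothesis is needed there, and this is how the paper argues.
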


\begin{proof}

	We first need to specify the structure maps.
	The source and target functor $(S,T)\colon \pionevert(E) \to X^2$ is automatic, as is the functor $\disc(X) \to \pionevert(E)$, by applying $\pionevert$ to the map $X \to X^I$ of spaces over $X^2$. 
	As $\Delta\colon X\to X^2$ is injective, $\pionevert(\Delta) \simeq X$, the groupoid with only identity arrows.
	Again, the reverse map $X^I \to X^I$ covers the swap map, and so induces a functor $\pionevert(E)\to \pionevert(E)$ as needed.
	The concatenation map gives the composition $\pionevert(E)\times_X\pionevert(E) \to \pionevert(E)$.

	The natural transformations $a,l,r,e,i$ are given by applying $\pionevert$ to the usual (vertical over $X^2$) homotopies that show that path composition is associative up to homotopy, constant paths are units up to homotopy and so on.
	Since any two vertical homotopies between the different concatenation orders of four paths are themselves vertical homotopic, we see that the pentagon equation must hold, and likewise for the other coherence diagrams.
\end{proof}

Note that the proof uses the structure of the interval as an $A_\infty$-cogroupoid, and so the structure of the path space as an $A_\infty$-groupoid, or indeed even as an algebra for the \emph{partial little intervals operad} \cite{Caruso-Waner_81} (see \cite{Cheng_CT14} for related discussion).
This is important in Trimble's definition of a flabby $n$-groupoid, and we can see here that we are essentially picking out just the lowest two dimensions of the structure.
Essentially we are truncating the hom $\infty$-groupoid of $\Pi_\infty(X)$ to a groupoid, but keeping the weak enrichment, so getting a bigroupoid.
One is tempted to conjecture that higher-dimensional analogues of this result exist, namely, given a semilocally $n$-connected space $X$, there is a topological fibrewise fundamental Trimble $(n-1)$-groupoid of the map $X^I \to X^2$, and that this is a topological fundamental $n$-groupoid of $X$.
Given connected $X$, the source fibre of this $n$-groupoid at a point is then conjecturally a topological $(n-1)$-stack that is a universal $n$-connected cover of $X$.


\providecommand{\bysame}{\leavevmode\hbox to3em{\hrulefill}\thinspace}
\providecommand{\MR}{\relax\ifhmode\unskip\space\fi MR }
\providecommand{\MRhref}[2]{%
  \href{http://www.ams.org/mathscinet-getitem?mr=#1}{#2}
}
\providecommand{\href}[2]{#2}

\end{document}